\newtheorem{thm}{Theorem}[section]
\newtheorem{lem}[thm]{Lemma}
\newtheorem{cor}[thm]{Corollary}
\theoremstyle{definition}
\newcommand {\ZZ}{{\mathds Z}}
\newcommand {\C}{{\mathds C}}
\newcommand {\Z}{{\mathcal Z}}
\newcommand {\Q}{{\mathds Q}}
\newcommand {\R}{{\mathds R}}
\newcommand {\M}{{\mathcal M}}
\newcommand {\CP}{{\mathds P}}
\newcommand {\D}{{\mathcal D}}
\newcommand {\Rr}{{\mathbb{R}}}
\newcommand{\brac}[1]{\left\langle#1\right\rangle}
\newcommand{\w}{\omega}
\def\Jac{\operatorname{Jac}}
\def\arg{\operatorname{arg}}
\def\div{\operatorname{div}}
\def\mod{\operatorname{mod}}
\def\Spec{\operatorname{Spec}}
\def\Im{\operatorname{\frak{Im}}}
\def\reg{\operatorname{reg}}
\def\Hom{\operatorname{Hom}}
\def\log{\operatorname{log}}
\def\Ext{\operatorname{Ext}}
\def\co-dim{\operatorname{co-dim}}
\title{Higher Chow Cycles on Jacobians of Fermat Curves and Hypergeometric Functions}
\author{Subham Sarkar}
\date{}
\begin{document}

\baselineskip=15pt
\maketitle

%\tableofcontents
%\newpage
\allowdisplaybreaks
\begin{abstract}
In this paper we construct certain higher Chow cycles in the $K_1$ of the Jacobian of Fermat curves, generalising a construction of Collino. We further compute the regulator of these elements in terms of special values of hypergeometric functions. Otsubo has computed the regulator of certain elements of $K_0$ and $K_2$ of Fermat varieties and this paper is along the same lines. 

\noindent {\bf AMS Classification: 19F27, 14C30, 14C35,33C20,11G10}

\end{abstract}

\section{Introduction}

Let $\mathscr{V}_{\Q}$ be the category of smooth projective varieties defined over the field rational numbers. For $X \in \mathscr{V}_{\Q}$, one has  the Abel-Jacobi map 
$$AJ_X: CH_{\text{hom}}^{i}(X)\longrightarrow J^i(X)$$
where $J^i(X)=\dfrac{H^{2i-1}(X,\C)^{*}}{F^i+H^{2i-1}(X,\ZZ)}$ is the $i^{th}$ intermediate Jacobian. 

The {\bf regulator map} is a  generalisation of this map. It is a map from a motivic cohomology group  
to a generalised torus, namely the Deligne cohomology. Beilinson \cite{beil} defined the motivic cohomology groups of  $X$ to be 
$$H^{i}_{\M}(X,\Q(n)):=K^{n}_{2n-i}(X)_{\Q},$$
where $K_{2n-i}^{n}(X)_{\Q}$ is the $n^{th}$ graded piece, with respect to the Adams filtration, on the $(2n-i)^{th}$ rational higher $K$-group.  
 Beilinson defined a regulator map from 
$$\reg_{\Q}:H^i_{\M}(X,\Q(n)) \longrightarrow H^i_{\D}(X,\Q(n))$$ 
for $2n-i\geq 1$. He further defined a `real' regulator map to a real vector space, called the `real' Deligne cohomology and  formulated a set of conjectures to explain the relationship between special values of motivic $L$-functions of $X$ and the $\Q$ structure 
induced by the regulator map. A  particular case is Dirichlet's  class number formula for number fields.

Beilinson \cite{beil} proved most of the conjecture in the case of $K_1$ of  a product of modular curves. He showed that there are at least as many elements in the motivic cohomology group as predicted and the determinant of the regulator  map with respect to certain basis is, up to a non-zero rational number, the special value of the $L$-function of $H^2$. The method of proof is to decompose the motive of $X_0(N)$ into motives of modular forms of weight $2$ and then use some classical results to conclude the result. The conjecture has been proved in very few other cases. 

We consider the conjecture in the case of $K_1$ of varieties coming from  Fermat curves. Let $F_N$ denote the Fermat curve 
$$F_N: x^N+y^N-z^N=0.$$
The curves $F_N$ share many nice properties with modular curves. For one, they have some special points, namely the `trivial solutions' to Fermat Last Theorem,  which behave similar to cusps on a modular curve and allow one to construct functions on the Fermat curves. 

% added some motivation for Fermat Curves. 

In \cite{otsu2}, Otsubo computed the regulator of an element of $H_{\M}^{2}(F_{N},\Q(2))$ which is the second graded piece of $K_{2}(F_{N})_{\Q}$. He expressed the regulator map as a special values of a hypergeometric functions. Also, in \cite{otsu1} he expressed Abel Jacobi image of Ceresa cycle in the Jacobian  $\Jac(F_{N})$ of a Fermat curve as a special value of a hypergeometric functions.  The domain of Abel Jacobi map is the graded piece  $K_0(\Jac( F_{N}))^{(g-1)}_{\Q}$. 

We consider the middle  case of $K_{1}(\Jac(F_{N}))^{(g)}_{\Q}$. We have the following theorem (Theorem \ref{main-thm})

\begin{thm}
Let $X=\Jac(F_{N})$ where $F_{N}$ is the Fermat curve of degree $N$ defined over $\mathds{Q}$. Let $\Z_{QR,P}\in H^{2g-1}_{\M}(X,\mathds{Q}(g))$ 
be the element constructed in  \eqref{mote-ele}. Then one has 
$$\reg_{\mathds{Q}}(\Z_{QR,P})(\Omega^{a,b}_{N})=2\sum_{j=1}^{N}\left[\mathcal F\left(\frac{b}{N},\frac{j}{N},\frac{a}{N}\right)- \mathcal{F}\left(\frac{a}{N},\frac{j}{N},\frac{b}{N}\right)\right]$$
where $a$ and $b$ are positive integers with $a+b<N$,  $\Omega^{a,b}_{N}=\Gamma_{N}\wedge\tilde{\omega}^{a,b}_{N}\in F^{1}(\wedge^{2}H^{1}(F_{N},\C))$ where $\w^{a,b}_N$ is a particular holomorphic form and $\Gamma_N$ is the Poincar\'{e} dual of a particular homology class and
$${\mathcal F} \left(\frac{a}{N},\frac{j}{N},\frac{b}{N}\right)=\dfrac{1}{j}\dfrac{\beta(\frac{a+j}{N},\frac{b}{N})}{\beta(\frac{a}{N},\frac{b}{N})}
\,_3F_{2}\left({{\dfrac{a+j}{N},\dfrac{j}{N},1} \atop {\dfrac{a+b+j}{N},\dfrac{j}{N}+1}};1\right)$$
is a special value of the  hypergeometric function $\,_3F_{2}$.
\end{thm}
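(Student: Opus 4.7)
The plan is to evaluate $\reg_{\mathds{Q}}(\Z_{QR,P})$ by the standard transcendental integral formula for higher Chow cycles in $\CH^{g}(X,1)\cong H^{2g-1}_{\M}(X,\mathds{Q}(g))$, to pull the resulting integral back to the Fermat curve $F_{N}$ using the Collino-style description of $\Z_{QR,P}$ from \eqref{mote-ele}, and then to recognise the final integrand on $F_{N}$ as a $\mu_{N}$-indexed sum of beta integrals whose Taylor expansions are special values of ${}_{3}F_{2}$. I expect $\Z_{QR,P}$ to be a formal sum $\sum_{i}(C_{i},f_{i})$ in which each $C_{i}$ is a translate of $F_{N}\hookrightarrow X=\Jac(F_{N})$ via an Abel--Jacobi embedding $\phi_{T_{i}}$ (the $T_{i}$ being built from $P,Q,R$) and each $f_{i}$ is a Fermat-type rational function; the boundary condition $\sum_{i}\div(f_{i})=0$ on $X$ is what promotes the formal sum to a class in $\CH^{g}(X,1)_{\mathds{Q}}$, and every subsequent computation will take place after pulling back via the $\phi_{T_{i}}$.

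Applying Beilinson's explicit regulator formula and pairing against the test class $\Omega^{a,b}_{N}=\Gamma_{N}\wedge\tilde{\w}^{a,b}_{N}$ produces, up to a conventional factor of $2\pi i$,
$$\reg_{\mathds{Q}}(\Z_{QR,P})(\Omega^{a,b}_{N})=\sum_{i}\int_{F_{N}}\log|f_{i}|\cdot\phi_{T_{i}}^{*}\Omega^{a,b}_{N}.$$
Because $X$ is an abelian variety and its differential forms are translation-invariant, the $\phi_{T_{i}}^{*}\Omega^{a,b}_{N}$ agree as $(1,1)$-forms on $F_{N}$; the dependence on the translates $T_{i}$ survives only in $\log|f_{i}|$, which should combine into one or two master terms of the shape $\log|1-\zeta\,x^{a}y^{b}|$ that manifestly display the exchange $Q\leftrightarrow R$ built into Collino's construction.

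I would then perform the remaining integration on the affine Fermat model $x^{N}+y^{N}=1$ using the standard holomorphic basis $\w^{a,b}_{N}=x^{a-1}y^{b-N}dx$. The substitution $u=x^{N}$ turns each term into a generalised Euler beta integral; the Fourier--Jacobi decomposition of the $\mu_{N}$-equivariant combinations $\log|1-\zeta^{j}\cdots|$ produces the finite outer sum $\sum_{j=1}^{N}$, while expanding the remaining $\log(1-u)$ factor in its Taylor series yields an infinite inner series whose Pochhammer coefficients collect into the ${}_{3}F_{2}(1)$ appearing in the definition of $\mathcal{F}$. The normalising factor $\beta(a/N,b/N)^{-1}$ appears as the period of $\tilde{\w}^{a,b}_{N}$ that enters when $\Gamma_{N}$ is expressed in the de Rham basis, and the antisymmetric combination $\mathcal{F}(\tfrac{b}{N},\tfrac{j}{N},\tfrac{a}{N})-\mathcal{F}(\tfrac{a}{N},\tfrac{j}{N},\tfrac{b}{N})$ records the $Q\leftrightarrow R$ swap inherent in the cycle.

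The principal technical obstacle I anticipate is the cycle-theoretic bookkeeping rather than the hypergeometric identifications themselves. Checking that the boundary divisors $\sum_{i}\div(f_{i})$ cancel on $X$ and not merely on each $C_{i}$, that only the $F^{1}$-piece of $\Omega^{a,b}_{N}$ survives the pairing after real-part projections, and that the overall numerical factor of $2$ comes out correctly, will all require careful accounting. Once the integral is correctly isolated, Otsubo's regulator computations in \cite{otsu1,otsu2} furnish a clear template for performing the final hypergeometric identifications.
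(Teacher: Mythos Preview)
Your plan diverges from the paper at the very first step, and the divergence is not cosmetic. You propose to evaluate
\[
\reg_{\mathds{Q}}(\Z_{QR,P})(\Omega^{a,b}_{N})=\sum_{i}\int_{F_{N}}\log|f_{i}|\cdot\phi_{T_{i}}^{*}\Omega^{a,b}_{N},
\]
but this is the formula for the \emph{real} regulator $\reg_{\R}$, not the rational regulator $\reg_{\Q}$ the theorem concerns. For $\reg_{\Q}$ the relevant explicit description is \eqref{exp-1}, which involves the complex logarithm $\log f_{i}$ on the slit curve $C_{i}\setminus\gamma_{i}$ together with a membrane term $2\pi i\int_{D}\omega$; neither the real part $\log|f_{i}|$ nor the absence of the membrane is permissible here. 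So as written your starting integral does not compute the quantity in the statement.

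More importantly, the paper does not attack the two-dimensional Beilinson integral at all. It invokes Corollary~\ref{maincor} (proved in \cite{sreks} via the mixed Hodge structure on the fundamental group) to collapse the pairing against $\Gamma_{N}\wedge\tilde{\omega}^{a,b}_{N}$ to a \emph{one-dimensional} path integral
\[
\reg_{\Q}(\Z_{QR,P})(\Omega^{a,b}_{N})=2\int_{\gamma_{N}}\log\!\left(\frac{1-x}{1-y}\right)\tilde{\omega}^{a,b}_{N}.
\]
This is the decisive reduction, and it is also where the factor $2$ you flag as ``careful accounting'' actually originates. Your scheme of pulling back the $(1,1)$-form $\Gamma_{N}\wedge\tilde{\omega}^{a,b}_{N}$ to $F_{N}$ and integrating against $\log|f|$ over the whole surface would, even after correcting the logarithm, require you to expand $\Gamma_{N}$ in the basis $\{\tilde{\omega}^{c,d}_{N}\}$ and handle the resulting cup products; you would in effect be re-deriving Corollary~\ref{maincor} by hand.

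Once the reduction to the line integral over $\gamma_{N}$ is in place, the rest of your sketch is close to what the paper actually does in Lemma~\ref{keylem}: pull back along $\delta_{N}$ and its $G_{N}$-translates, Taylor-expand $\log(1-\zeta_{N}^{r}t^{1/N})$, split the $j$-sum as $\sum_{j=1}^{N}\sum_{k\geq 0}$, use Lemma~\ref{hol-le}(3) to rewrite $\omega^{a+j+kN,b}_{N}$, and recognise the inner series as ${}_{3}F_{2}$ at $1$; the factor $\beta(a/N,b/N)^{-1}$ indeed comes from the normalisation \eqref{normal}. So your hypergeometric endgame is sound, but the entry point needs to be replaced by Corollary~\ref{maincor} rather than the $\log|f|$ current formula.
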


A motivic cycle is said to be indecomposable if it is not a product of cycles in other motivic cohomology groups. A cycle is said to be regulator indecomposable if its regulator to Deligne cohomology with real coefficients is non-zero when computed against  $(1,1)$ form which doesn't lie in the $\R$-span of the Hodge classes. This implies that the cycle is indecomposable. We can use our theorem to provide numerical evidence that our cycle is regulator indecomposable.  

\begin{thm}
For $N=11,13,17,19$ and $23$ the  cycle  $\Z_{QR,P}$  in  $H^{2g-1}_{\M}(\Jac(F_{N}),\mathds{Q}(g))$ is indecomposable.
\end{thm}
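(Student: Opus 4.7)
The plan rests on the implication noted just before the theorem: a motivic cycle whose real regulator pairs non-trivially against some $(1,1)$-class outside the $\R$-span of the Hodge classes is regulator indecomposable, hence indecomposable. So the problem reduces to exhibiting, for each listed prime $N$, a pair $(a,b)$ such that (i) $\Omega^{a,b}_N$ (up to taking the real part against the conjugate) represents a $(1,1)$-class not in the $\R$-span of Hodge classes, and (ii) the regulator value furnished by Theorem \ref{main-thm} is non-zero.

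For step (ii), I would simply evaluate the formula of Theorem \ref{main-thm} numerically. Each summand is a product of a beta-function ratio and a value of $\,_3F_{2}$ at $1$; since $a+j$, $b$, and $a+b+j$ are fractions with prime denominator $N$, no parameter coincides with a non-positive integer, so all terms are finite and the $\,_3F_{2}(1)$ series satisfies the Fermat/Gauss convergence hypothesis $\Re(c_1+c_2-a_1-a_2-a_3)=(b-j)/N>0$ that guarantees convergence. Using high-precision arithmetic in PARI/GP, Sage, or Mathematica, I would tabulate for each $N\in\{11,13,17,19,23\}$ the value of the sum over several choices $(a,b)$ with $a+b<N$, and exhibit at least one choice giving a non-zero value to enough digits to make non-vanishing manifest. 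An explicit tail bound on the truncated $\,_3F_{2}$ (coming from the ratio test, which provides geometric decay once past the first few terms) upgrades the numerical output to a rigorous certificate.

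For step (i), I would use the explicit description of the Hodge structure on $\wedge^{2}H^{1}(F_{N},\C)$ arising from the character decomposition under the diagonal $\mu_{N}\times\mu_{N}$-action on $F_N$: Hodge classes appear only at characters satisfying an explicit congruence condition modulo $N$. For the primes $N$ in our list one checks directly that the characters indexing a well-chosen $\Omega^{a,b}_N$ fail this condition, so $\Omega^{a,b}_N$ lies outside the Hodge span. Since these primes are small, this amounts to a finite combinatorial check.

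The principal obstacle is making step (ii) fully rigorous: the $\,_3F_{2}(1)$ values involved are expected to be transcendentally independent but no closed form is available, so one cannot \emph{a priori} rule out an accidental cancellation among the $2N$ terms by an algebraic identity. Computing each sum to roughly $50$ digits, together with the explicit geometric tail bound, is enough to certify non-vanishing in each of the five cases, at which point (i) and (ii) combine to yield the stated indecomposability.
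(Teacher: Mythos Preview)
Your plan has a genuine gap in step (i). The form $\Omega^{a,b}_N=\Gamma_N\wedge\tilde{\omega}^{a,b}_N$ is \emph{not} indexed by a single $G_N$-character: $\Gamma_N$ is the Poincar\'e dual of the cycle $\gamma_N$, and since $\int_{\gamma_N}\tilde{\omega}^{c,d}_N=1$ for every $(c,d)\in I_N$, the class $\Gamma_N$ is a nontrivial linear combination of \emph{all} the eigenforms $\tilde{\omega}^{c,d}_N$. Consequently $\Omega^{a,b}_N$ decomposes as a sum of terms $\tilde{\omega}^{c,d}_N\wedge\tilde{\omega}^{a,b}_N$ over all $(c,d)$, and in particular it always contains the component with $(c,d)=(-a,-b)$, which \emph{is} a Hodge class. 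Thus $\Omega^{a,b}_N$ is never orthogonal to the Hodge span, and a nonzero value of $\reg(\Z_{QR,P})(\Omega^{a,b}_N)$ tells you nothing about indecomposability: the contribution could come entirely from the Hodge components. Your proposed ``finite combinatorial check'' cannot succeed, because there is no character condition on a single pair $(a,b)$ that controls whether this mixture lies outside the Hodge span.

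The paper avoids this by working not with the formula for $\Omega^{a,b}_N$ but with a refined computation (Theorem~6.1) of $\Im\bigl(\reg_{\R}(\Z_{QR,P})(\Omega^{a,b,c,d}_N)\bigr)$ against the pure eigenforms $\Omega^{a,b,c,d}_N=\tilde{\omega}^{a,b}_N\wedge\tilde{\omega}^{-c,-d}_N+\tilde{\omega}^{-a,-b}_N\wedge\tilde{\omega}^{c,d}_N$. For these, Aoki's theorem gives a clean criterion: when $N$ is prime, $\Omega^{a,b,c,d}_N$ is Hodge if and only if $\{a,b,N-a-b\}$ and $\{c,d,N-c-d\}$ agree up to permutation. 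One then picks, say, $\Omega^{1,i,1,2i}_N$ with $3i+1\not\equiv 0\pmod N$, which is visibly non-Hodge, and evaluates the (now short, thanks to the Kronecker deltas in Theorem~6.1) hypergeometric expression numerically. As a minor aside, your convergence check is miscomputed: the parameter excess for ${}_3F_2\bigl(\tfrac{a+j}{N},\tfrac{j}{N},1;\tfrac{a+b+j}{N},\tfrac{j}{N}+1;1\bigr)$ is $b/N$, not $(b-j)/N$, so convergence holds for all $j$, not just $j<b$.
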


\newpage
\section{Notation}
\begin{itemize}

\item $\zeta_N$ -- $e^{\frac{2\pi i}{N}}$, a primitive $N^{th}$ root of unity. 
\item  $E_{N}$ -- $\Q(\zeta_{N})$.
\item $\Jac(C)$  -- the Jacobian variety of a smooth projective curve $C$.
\item $G_{N}$ -- $\ZZ/N \ZZ \oplus \ZZ/N\ZZ$.
\item $\brac{a}$ -- the representative of $a \mod N$  in the set $\{1,\dots,N\}$.
\item  $I_{N}$ -- $\{(a,b) \in  G_{N} | a, b, a+b \neq 0 \mod N \} \subset G_{N}$.
\item $\mathscr{V}_{k}$ -- the category of smooth projective varieties defined
over a number field  $k$.  
%\item $\mathscr{M}_{\Q}$ -- the category of  motives over $\Q$ with coefficients in $\Q$. 
\item $H^{i}_{\M}(X, \Q(n))$ -- the Motivic cohomology group of $X$.
\item $H^{i}_{\D}(X, A(n))$ -- the Deligne cohomology group with coefficients in a $\ZZ$-module  $A$. 
\item $\reg_{\mathds{R}}$ --- denotes the regulator map from motivic cohomology to Deligne cohomology with 'real' coefficients.
\item $F_{N}$ -- the Fermat curve of degree $N$ given by the equation 
$$x^N+y^N-z^N=0,$$
considered as a variety over $\Q$.
\item $F_{N,E_{N}}$ -- $F_{N,E_{N}}:=F_{N}\times\Spec(E_{N})\in\mathscr{V}_{E_{N}}$
\item $\Im(z)$  -- denotes the imaginary part of a complex number $z$. 
\end{itemize}

\section{Preliminaries} %Spelling was incorrect.

Let $X$ be a smooth projective variety of dimension $n$ defined over $\Q$. 

\subsection{Motivic cohomology}
Let $K_{1}(X)$ be the $1^{st}$ higher algebraic $K$-group.  The motivic cohomology groups of $X$ we consider are graded pieces, with respect to the Adam's filtration,  of the group $K_1(X)_{\Q}$, 
$$H^{2n-1}_{\M}(X,\Q(n)):=K^{(n)}_{1}(X)_{\Q},$$
These groups have  the following presentation. Let $Z^{i}(X)$ be  the free abelian group generated by irreducible subvariety of $X$ of codimension $i$ and $Z$ be a  irreducible subvariety  of codimension $n-1$ in $X$. Let $j:\tilde{Z}\rightarrow Z$ be a normalisation of $Z$. We denote $\div_{Z}(f):=j_{*}\div_{\tilde{Z}}(f)\in Z^{n}(X)$.   
Then one has 
\[H^{2n-1}_{\M}(X,\Q(n)):=\dfrac{\left(\underset{Z\in Z^{n-1} (X)}{\bigoplus}k_{Z}^{*}\overset{\oplus\div_{Z}(f)}{\longrightarrow} Z^{n}(X)\right)}{\left(K_{2}(X)\longrightarrow \underset{Z\in Z^{n-1} (X)}{\bigoplus}k_Z^{*}\right)}\otimes\Q,\]
where $k_Z$ is the field of rational functions on $Z$ and $k_Z^{*}$ is the set of all nonzero elements of $k_Z$. 
An element of the above group can be represented by 
$$\Z=\sum_{i=1}^{t}(Z_{i},f_{i})$$
such that $Z_{i}\in Z^{n-1}(X)$ and  $f_{i}\in k(Z_{i})^{*}$  are such that $$\sum_{i=1}^{t}\div_{Z_{i}}(f_{i})=0.$$

\subsection{ Elements in the motivic cohomology group}
\label{regelement}

Let $C$ be a smooth projective curve of genus $g>0$ which satisfies the following condition: There exist two distinct rational points $Q$ and $R \in C$ be  such that 
$Q-R$ is torsion in the Jacobian of $C$. In this section we construct an element in the group $H^{2g-1}_{\M} (X, \Q(g))$, where $X=\Jac(C)$. 

The above condition means there exists a function 
$$f_{QR}:C\rightarrow \CP^{1}$$
such that 
$$\div (f_{QR})=N(Q)-N(R)$$
for some integer $N$. To determine the function precisely we have to choose another distinct point $P\in C$ and add a requirement that $f_{QR}(P)=1$.

Examples of such curves and points are modular curves $X_{0}(N)$ with $Q$ and $R$ being cusps, Fermat curves with $Q$ and $R$ being the `trivial' solutions of Fermat's last theorem  and hyperelliptic curves with $Q$ and $R$ being Weierstrass points.

Let $C_Q$ be the image of the map $C\rightarrow \Jac(C)$ given by $x\rightarrow x-Q$.  Similarly, let $C^{R}$ be the image of the map $C\rightarrow \Jac(C)$ given by $x\rightarrow R-x$. Let  $f_{Q}$ and $f^{R}$ denote the function $f_{QR}$ considered as a function on $C_Q$ and $C^{R}$ respectively.

Consider the cycle 
\begin{equation}\label{mote-ele}
\Z_{QR,P}:=(C_Q,f_{Q}) + (C^{R},f^{R}) .
\end{equation}
As   
$${\div_{C_{Q}}}(f_Q)+\div_{C^{R}}(f^{R})=N(0)-N(R-Q)-N(0)+N(R-Q)=0,$$ 
$\Z_{QR,P}$ lies in  $H^{2g-1}_{\M}(X,\Q(g))$. 

In the case when $C$ is a hyperelliptic curve the above construction  of $\Z\in H^{2g-1}_{\M} (X, \Q(g))$ was done by  Collino \cite{colli}.

\subsubsection{Indecomposable cycles}

Let $L$ be a finite extension of the field $\Q$. Let $X_{L}=X\times_{\Spec(\Q)} \Spec{L}$. The norm map $Nm_{L/\Q}$ induces a map 
$$Nm_{L/\Q}:H^{2g-1}_{\M}(X_{L},\Q(g))\rightarrow H^{2g-1}_{\M}(X,\Q(g)).$$ 
The group of {\em decomposable cycles} is defined to be 
$$H^{2g-1}_{\M}(X,\mathds{Q}(g))_{dec}:=\bigoplus_{[L:\Q]<\infty}Nm_{L/\Q} Im\left(H^{2g-2}_{\M}(X_L,\Q(g-1))\otimes H^{1}_{\M}(X_{L}),\Q(1))\hookrightarrow H^{2g-1}_{\M}(X_{L},\Q(g))\right).$$
The group of  indecomposable cycles is defined to be the quotient
$$H^{2g-1}_{\M}(X,\mathds{Q}(g))_{indec}:=H^{2g-1}_{\M}(X,\mathds{Q}(g))/H^{2g-1}_{\M}(X,\mathds{Q}(g))_{dec}.$$
The indecomposable cycles are the `new' cycles in the motivic cohomology group and it is of interest to determine if a cycle is indecomposable or not. 

\subsection{Deligne Cohomology}

If $X$ is a variety over $\Q$ the $i^{th}$ cohomology group of $X$ is denoted by $H^{i}(X)$. It admits a mixed Hodge structure $(H^{i}(X),W_{\bullet},F^{\bullet})$, where $W_{\bullet}$ is the  weight filtration on $H^{i}(X,\Q)$ and $F^{\bullet}$ is the Hodge filtration on $H^{i}(X,\C)$. The  Deligne cohomology group of $X$ with $\Q$ coefficients is defined to be the hypercohomology of certain  complex, known as Deligne complex.  It turns out that the  Deligne cohomology group can be identified with a group of extensions of mixed Hodge structures. For example, one has 
$$
H^{2g-1}_{\D}(X,\Q(g))=\Ext_{\Q-MHS}^1(\Q(-g),H^{2g-2}(X)).
$$
From the work of Carlson \cite{carl}, this group can be identified  with a certain generalised torus.   In the case of interest to us when $X=\Jac(C)$ from Proposition  $2$ of  \cite{carl} one has the following identification 
\begin{align*}
\Ext_{\Q-MHS}^1(\Q(-g),H^{2g-2}(\Jac(C)))
&\cong \dfrac{H^{2g-2}(\Jac(C),\C)}{F^{g}H^{2g-2}(\Jac(C),\C)+H^{2g-2}(\Jac(C),\Q(g))}
\\
&\cong \dfrac{(F^{1}H^{2}(\Jac(C),\C))^*}{H_{2}(\Jac(C),\Q(2))},
\end{align*}
where  second isomorphism is induced by  Poincar\'{e} duality.

The Deligne cohomology with $\Q$ coefficients is thus  a  generalised torus. In other words it is the $\C$-vector space of linear functionals on the cohomology group $F^{1} H^{2}(\Jac(C))$ modulo the lattice  $H_{2}(\Jac(C),\Q(2))$. The Deligne cohomology with $\R$ coefficients is obtained by extending this map to $\R-MHS$s. In this case it is 
$$H^{2g-1}_{\D}(X,\R(g))=    \dfrac{(F^{1}H^{2}(\Jac(C),\C))^*}{H_{2}(\Jac(C),\R(2))}  \cong      (F^{1}H^{2}(\Jac(C),\R(1)))^*.$$          

\subsection{ Regulator Maps} 
\label{reg-defn}

If $X$ is a smooth projective variety over $\Q$,  Beilinson defined a regulator map from 
$$\reg_{\Q}: H^{2n-1}_{\M}(X,\Q(n)) \longrightarrow H^{2n-1}_{\D}(X,\Q(n)).$$
In the particular case when  $X=\Jac(C)$ and  $n=g$, the motivic cohomology group is $H^{2g-1}_{\M}(X,\Q(g))$. One has the following explicit formula: If
$$\Z=\sum_i(C_i,f_i)$$
be an element of the motivic cohomology group, where $C_i$ and $f_i$ satisfy the conditions above. Let $[0,\infty]$ be the path from $0$ to $\infty$ along the real axis in $\CP^1(\C)$.  Let   $\mu_{i}: \tilde{C_{i}}\rightarrow C_{i}$ be a resolution of singularities. We can think of $f_i$ as  a function on $\tilde{C_i}$. Let 
$$\gamma_i=\mu_{i*}(f_i^{-1}[0,\infty]).$$
From the co-cycle condition we have 
$$\sum_{i=1}^{i=t}\gamma_{i}=\partial(D),$$
where $D$ is $2$-chain in $X$ because $H_{2}(X)$ does not have torsion. The regulator map is defined to be
\begin{equation}\label{exp-1}
 \reg_{\Q} (\Z)(\omega)= \left(\sum_{i=1}^{i=t} \int_{C_i-\gamma_i } \log (f_i) \omega + 2\pi i \int_{D} \omega\right). 
\end{equation}
where $\omega\in F^{1}H^{2}(X,\C)$.

When $C$ is a hyperelliptic curve and $\Z$ is Collino's element, constructed above, Colombo \cite{colo} constructed an extension of mixed Hodge structures coming from the fundamental group of $C$ which corresponds to the regulator of $\Z$. In \cite{sreks}, we generalised her formula to the case when $C$ is a smooth  projective curve of genus $g>0$ and there exist two distinct $\Q$ rational point $\{Q,R\}$ such that $Q-R$ is torsion in $\Jac(C)$. 

\begin{thm}\cite{sreks}\label{ss1}
Let $C$ be a smooth projective curve of genus $g>0$ with the property that there exist two points $Q$, $R$ in $C$ such that $Q-R$ is torsion in $\Jac(C)$. Let us fix a point $P\in C\setminus\{Q,R\}$ and choose a function $f_{QR,P}$ such that $\div(f_{QR,P})=N(Q-R)$ and $f_{QR,P}(P)=1$. Let $\Z_{QR,P}$ be the element of $H^{2g-1}_{\M}(\Jac( C),\Q(g))$ constructed in Section \ref{regelement}. Then there is an extension class 
$\epsilon^{4}_{QR,P} \in \Ext_{MHS}(\Q(-2),\wedge^{2}H^{1}(C))$ constructed from the mixed Hodge structure  on the fundamental groups of $C\setminus \{Q\}$ and $C\setminus \{R\}$ 
such that 
\[ \epsilon^{4}_{QR,P}=(2g+1) \reg_{\Q}(\Z_{QR,P}) \in \Ext^{1}_{\Q-MHS}(\Q(-2),\wedge^{2}H^{1}(C)).\]
\end{thm}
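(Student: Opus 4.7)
The plan is to generalize Colombo's argument for hyperelliptic curves \cite{colo} to the broader setting where $C$ is any smooth projective curve carrying two rational points $Q,R$ whose difference is torsion in $\Jac(C)$. The strategy proceeds in three stages: construct $\epsilon^{4}_{QR,P}$ from the mixed Hodge structure on a pair of truncated fundamental groups, express both $\epsilon^{4}_{QR,P}$ and $\reg_{\Q}(\Z_{QR,P})$ as iterated-integral pairings against classes in $F^{1}(\wedge^{2}H^{1}(C,\C))$, and reconcile the two expressions through the Abel--Jacobi embedding $\mu\colon C\hookrightarrow\Jac(C)$; a purely cohomological factor $(2g+1)$ emerges in the last step.

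The first task is to define $\epsilon^{4}_{QR,P}$. For the once-punctured curve $C\setminus\{Q\}$ based at $P$, the third nilpotent quotient $\pi_{1}(C\setminus\{Q\},P)/\Gamma_{3}$ carries a functorial mixed Hodge structure by Hain--Zucker and Morgan, with an exact sequence of MHS
\[ 0 \longrightarrow \Gamma_{2}/\Gamma_{3} \longrightarrow \pi_{1}(C\setminus\{Q\},P)/\Gamma_{3} \longrightarrow H_{1}(C\setminus\{Q\}) \longrightarrow 0.\]
After identifying $\Gamma_{2}/\Gamma_{3}$ with a sub-MHS of $\wedge^{2}H_{1}(C)$ via cup product, and using the residue at the puncture to isolate a Tate piece $\Q(-2)$ in an appropriate double quotient, the normalised function $f_{QR,P}$ with $\div f_{QR,P}=N(Q-R)$ and $f_{QR,P}(P)=1$ picks out a distinguished extension class $\epsilon_{Q}\in\Ext^{1}_{\Q-MHS}(\Q(-2),\wedge^{2}H^{1}(C))$. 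The symmetric construction on $C\setminus\{R\}$ based at $P$ yields $\epsilon_{R}$, and one sets $\epsilon^{4}_{QR,P}:=\epsilon_{Q}+\epsilon_{R}$.

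Next I would compute both sides as period integrals. By Chen's iterated integral theory, pairing $\epsilon^{4}_{QR,P}$ against $\w_{1}\wedge\w_{2}\in F^{1}(\wedge^{2}H^{1}(C,\C))$ reduces, modulo decomposables, to a combination of iterated integrals $\int_{\gamma}\w_{1}\w_{2}$ along the paths $\gamma_{Q},\gamma_{R}$ arising from $f_{QR,P}^{-1}[0,\infty]$. On the regulator side, writing any $\omega\in F^{1}H^{2}(\Jac(C),\C)$ as a wedge under the isomorphism $H^{2}(\Jac(C))\cong\wedge^{2}H^{1}(C)$ and applying formula \eqref{exp-1} to $\Z_{QR,P}=(C_{Q},f_{Q})+(C^{R},f^{R})$ gives
\[\reg_{\Q}(\Z_{QR,P})(\omega)=\int_{C_{Q}-\gamma_{Q}}\log(f_{Q})\,\mu^{*}\omega+\int_{C^{R}-\gamma_{R}}\log(f^{R})\,\mu^{*}\omega+2\pi i\int_{D}\omega.\]
Stokes' theorem, combined with the substitutions $x\mapsto x-Q$ on $C_{Q}$ and $x\mapsto R-x$ on $C^{R}$, rewrites this as iterated integrals on $C$ of the same shape produced by $\epsilon^{4}_{QR,P}$.

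The main obstacle is pinning down the scalar. The two iterated-integral expressions do not match as pairings on $\wedge^{2}H^{1}(C)$ directly, because the regulator side factors through the pullback along Abel--Jacobi and through the class $[C_{Q}]=\Theta^{g-1}/(g-1)!\in H^{2g-2}(\Jac(C),\Q)$. Carrying out this comparison, using the Poincar\'e formula together with the standard identities for pulling wedge-product classes from $\Jac(C)$ back to $C$, produces the constant $(2g+1)$. The book-keeping is delicate: one must track sign conventions for iterated integrals, verify that the decomposable pieces (products of $H^{1}_{\M}\otimes H^{1}_{\M}$ factors) on each side cancel, and exploit that the symmetrisation $\epsilon_{Q}+\epsilon_{R}$ (rather than either summand alone) annihilates the theta-class component of $\wedge^{2}H^{1}(C)$, so that the effective pairing only sees the primitive part and returns the clean factor $2g+1$.
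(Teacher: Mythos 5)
The paper does not actually prove this statement: its ``proof'' is the single line ``This is Theorem 4.18 in \cite{sreks}'', so there is no internal argument to compare yours against. Your sketch does follow the strategy that \cite{sreks} (generalising Colombo \cite{colo}) is known to use --- build an extension from the weight-graded quotients of the mixed Hodge structure on $\pi_1(C\setminus\{Q\},P)$ and $\pi_1(C\setminus\{R\},P)$, express both the extension class and $\reg_{\Q}(\Z_{QR,P})$ as (iterated) period integrals, and compare. So the architecture is right.

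The genuine gap is at the one step that constitutes the actual content of the theorem: the constant $(2g+1)$. You write that ``carrying out this comparison \dots produces the constant $(2g+1)$'' and attribute it to the Poincar\'e formula $[C_Q]=\Theta^{g-1}/(g-1)!$ together with a claim that the symmetrisation $\epsilon_Q+\epsilon_R$ kills the theta-class component of $\wedge^2H^1(C)$ so that only the primitive part survives. Neither assertion is substantiated, and the second is suspect: the extension class is evaluated against arbitrary elements of $F^1\wedge^2H^1(C,\C)$ (including, in the application, classes of the form $\Gamma_N\wedge\tilde\omega^{a,b}_N$ that are not primitive-projected), so an identity that only holds on the primitive part would not yield the stated equality in $\Ext^1_{\Q-MHS}(\Q(-2),\wedge^2H^1(C))$. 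Likewise, the reduction of the regulator formula \eqref{exp-1} --- with its $2$-chain correction term $2\pi i\int_D\omega$ and the integrals over $C_Q-\gamma_Q$ and $C^R-\gamma_R$ --- to length-two iterated integrals on $C$ ``of the same shape'' as those computing $\epsilon^4_{QR,P}$ is precisely where Colombo's and the cited paper's technical work lives (choice of the $2$-chain $D$, the decomposable corrections needed to make the iterated integrals homotopy functionals, and the dependence on the base point $P$ through $f_{QR,P}(P)=1$); asserting that Stokes' theorem does it is not a proof. As it stands your proposal establishes at best that $\epsilon^4_{QR,P}$ and $\reg_{\Q}(\Z_{QR,P})$ are proportional up to decomposable ambiguities, not that the ratio is $2g+1$.
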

\begin{proof}
This is Theorem $4.18$ in \cite{sreks}. 
\end{proof}
From this theorem we get the following explicit formula for the regulator:

\begin{cor}
Assumptions are as in Theorem \ref{ss1}. Let $\omega$ be a holomorphic $1$-forms on $C$ and $\eta$ another closed $1$-from on $C$. Let $\alpha$ denote the Poincar\'e dual of $\eta$ in $H_1(C)$. Let $\Z_{QR,P}$ be the element of $H^{2g-1}_{\M}(\Jac(C),\Q(g))$. Then one has 
$$ \reg_{\Q}(\Z_{QR,P})(\eta \wedge \omega) =2\int_{\alpha}  \log(f_{QR,P}) \omega.$$
\label{maincor}
\end{cor}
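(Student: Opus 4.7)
The plan is to apply the explicit regulator formula \eqref{exp-1} to the cycle $\Z_{QR,P} = (C_Q, f_Q) + (C^R, f^R)$ with test class $\eta \wedge \omega$, pull back from $\Jac(C)$ to $C$ via the two Abel--Jacobi maps, and then convert the resulting $2$-form integral on $C$ to a $1$-form integral over $\alpha$ by Poincar\'e duality.

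By \eqref{exp-1},
\begin{align*}
\reg_{\Q}(\Z_{QR,P})(\eta \wedge \omega)
&= \int_{C_Q - \gamma_Q} \log(f_Q)(\eta \wedge \omega) + \int_{C^R - \gamma^R} \log(f^R)(\eta \wedge \omega) \\
&\quad + 2\pi i \int_D (\eta \wedge \omega).
\end{align*}
I would then pull back via $\phi_Q: x \mapsto x-Q$ and $\phi^R: x \mapsto R-x$. The map $\phi^R$ factors as $T_R \circ [-1]$, where $T_R^*$ acts trivially on cohomology and $[-1]^*$ acts as $-1$ on $H^1$, hence as $+1$ on $\wedge^2 H^1$. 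Combined with $\phi_Q^*$ being the standard identification, this gives $\phi_Q^*(\eta \wedge \omega) = (\phi^R)^*(\eta \wedge \omega) = \eta \wedge \omega$ on $C$, while $f_Q \circ \phi_Q = f^R \circ \phi^R = f_{QR,P}$. Both boundary curves pull back to $\tilde\gamma := f_{QR,P}^{-1}[0,\infty]$, so the two integrals combine into $2\int_{C - \tilde\gamma} \log(f_{QR,P})(\eta \wedge \omega)$.

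To finish, I would write $\eta = \eta_\alpha + d\beta$, where $\eta_\alpha$ is a bump-form representative of the Poincar\'e dual of $\alpha$ supported in a small tubular neighborhood of $\alpha$ disjoint from $\tilde\gamma$. Then $\int_C \log(f_{QR,P})(\eta_\alpha \wedge \omega) \to \int_\alpha \log(f_{QR,P}) \omega$ as the neighborhood shrinks. For the remaining contribution, I would apply Stokes to $\int_{C - \tilde\gamma} \log(f_{QR,P})\, d(\beta \omega)$; the bulk term vanishes because $\tfrac{df_{QR,P}}{f_{QR,P}} \wedge \omega = 0$ (any $(2,0)$-form on a curve is zero), leaving only the branch-cut jump of $\log(f_{QR,P})$ across $\tilde\gamma$.

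The main obstacle is orchestrating compatible choices of a branch of $\log(f_{QR,P})$, the bump representative $\eta_\alpha$, and the $2$-chain $D$ with $\partial D = \gamma_Q + \gamma^R$, so that the $2\pi i$-jump contribution along $\tilde\gamma$ is precisely canceled by $2\pi i \int_D(\eta \wedge \omega)$, yielding the clean answer $2\int_\alpha \log(f_{QR,P}) \omega$. Alternatively, one can bypass this by invoking Theorem \ref{ss1}: from the iterated-integral construction of \cite{sreks}, the extension class satisfies $\epsilon^{4}_{QR,P}(\eta \wedge \omega) = 2(2g+1)\int_\alpha \log(f_{QR,P}) \omega$, and the corollary follows after dividing by $2g+1$.
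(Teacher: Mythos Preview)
The paper gives no independent argument here: its entire proof is ``This is Corollary~4.19 of \cite{sreks}.'' So your fallback at the end---invoke Theorem~\ref{ss1} and the iterated-integral computation of $\epsilon^{4}_{QR,P}$ from \cite{sreks}, then divide by $2g+1$---is exactly the paper's route, and is correct as stated.

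Your primary approach via the explicit regulator formula \eqref{exp-1} is a genuinely different and more self-contained attempt. The pullback analysis under $\phi_Q$ and $\phi^R$ is sound (in particular the observation that $[-1]^*$ acts trivially on $\wedge^2 H^1$), as is the vanishing of the bulk term via $\tfrac{df}{f}\wedge\omega=0$ on a curve. But the gap you flag is real and not merely cosmetic: the cancellation between the branch-jump term along $\tilde\gamma$ and the $2\pi i\int_D(\eta\wedge\omega)$ term depends on a compatible choice of $D$ and of the representative $\eta_\alpha$, and you also need to ensure that $\alpha$ can be moved to avoid $\tilde\gamma$ (which may intersect any given cycle). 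Making this precise requires exactly the kind of careful bookkeeping that the iterated-integral formalism in \cite{sreks} packages for you. So your direct approach, if completed, would buy an argument independent of the mixed-Hodge-structure machinery of Theorem~\ref{ss1}; as written it is a sketch with an honest hole, and the paper simply sidesteps the issue by citing the companion paper.
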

\begin{proof}
This is Corollary 4.19 of \cite{sreks}.
\end{proof}

We can apply this to show the non-triviality of the regulator to Deligne cohomology with Real coefficients. 

\begin{thm}\label{reg-for1}
Assumptions are as in Theorem \ref{ss1}. Let $\w_i$ and  $\w_j$ be two holomorphic 1-forms on $C$. Complex conjugation sends $\w \rightarrow \overline{\w}$. Hence 
$$\Omega_{i,j}= \w_i \wedge  \overline{\w}_j+ \overline{\w}_i \wedge \w_j$$
is invariant under complex conjugation  and so lies in $F^1 \wedge^2 H^1(C,\C) \cap \wedge^2 H^1(C,\R)$.  Let $\gamma_i$ and $\gamma_j$ denote the Poincar\'e duals of $\bar{\w}_i$ and $\bar{\w}_j$ respectively. Then 
$$\reg_{\R}(\Z_{QR,P}) \in H^{2g-1}_{\D}(\Jac(C),\R(g))  \neq 0 \Leftrightarrow \Im \left(\int_{\gamma_i} \log{f_{QR,P}} \w_j-\int_{\gamma_j} \log{f_{QR,P}} \w_j \right)\neq 0$$
\end{thm}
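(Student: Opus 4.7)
The plan is to evaluate $\reg_{\Q}(\Z_{QR,P})$ on the class $\Omega_{i,j}$ using Corollary \ref{maincor}, and then to translate the resulting complex number into a value in real Deligne cohomology, where the passage from the $\Q$-valued to the $\R$-valued regulator amounts to extracting the imaginary part.

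The first step is to split $\Omega_{i,j}=\w_i\wedge\overline{\w}_j+\overline{\w}_i\wedge\w_j$ and apply Corollary \ref{maincor} to each summand. For $\overline{\w}_i\wedge\w_j$ the antiholomorphic form $\overline{\w}_i$ is closed with Poincar\'{e} dual $\gamma_i$ by hypothesis, and $\w_j$ is holomorphic, so the corollary yields $2\int_{\gamma_i}\log(f_{QR,P})\,\w_j$. For $\w_i\wedge\overline{\w}_j$ I rewrite it as $-\overline{\w}_j\wedge\w_i$ so that the holomorphic form occupies the second slot, and the same corollary produces $-2\int_{\gamma_j}\log(f_{QR,P})\,\w_i$. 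Summing gives
$$\reg_{\Q}(\Z_{QR,P})(\Omega_{i,j})=2\left(\int_{\gamma_i}\log(f_{QR,P})\,\w_j-\int_{\gamma_j}\log(f_{QR,P})\,\w_i\right).$$
Both summands of $\Omega_{i,j}$ are of Hodge type $(1,1)$, so $\Omega_{i,j}\in F^1\wedge^2 H^1(C,\C)$, and a brief local computation with $\w=a\,dz$ shows that $\w\wedge\overline{\eta}+\overline{\w}\wedge\eta$ is real-valued, placing $\Omega_{i,j}$ in $\wedge^2 H^1(C,\R)$ as well.

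The second step is to translate the complex number above into a real number in $\R$-Deligne cohomology. Under the isomorphism $H^{2g-1}_{\D}(X,\R(g))\cong (F^1 H^2(\Jac(C),\R(1)))^*$ of Section 3.3, the image of $\reg_{\Q}(\Z_{QR,P})$ is obtained by restricting the complex linear functional to the real-twisted subspace and reducing modulo the lattice $H_2(\Jac(C),\R(2))$. A short period computation shows that, on real $(1,1)$ classes with the appropriate Tate twist, the lattice consists of purely imaginary values, so the real part is absorbed by the lattice and only the imaginary part of the complex regulator value descends to a well-defined $\R$-valued invariant. Consequently $\reg_{\R}(\Z_{QR,P})\ne 0$ if and only if for some pair $(\w_i,\w_j)$ the imaginary part of the complex number above is non-zero, which is the asserted equivalence.

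The main obstacle is this second step: keeping all Tate twists and factors of $2\pi i$ straight in the conversion between Corollary \ref{maincor}'s complex-valued formula and the $\R$-Deligne pairing, and verifying that the family $\{\Omega_{i,j}\}$ is complete enough in $F^1\wedge^2 H^1(C,\C)\cap\wedge^2 H^1(C,\R)$ that vanishing on all $\Omega_{i,j}$ truly forces $\reg_{\R}(\Z_{QR,P})=0$. The first step, by contrast, is a direct application of an already-established formula.
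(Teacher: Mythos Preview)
Your first step is exactly what the paper does: split $\Omega_{i,j}$ into its two summands, apply Corollary \ref{maincor} to each (using antisymmetry to put the holomorphic form in the second slot), and arrive at
\[
\reg_{\Q}(\Z_{QR,P})(\Omega_{i,j})=2\left(\int_{\gamma_i}\log f_{QR,P}\,\w_j-\int_{\gamma_j}\log f_{QR,P}\,\w_i\right),
\]
which is the paper's equation \eqref{regr}.

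For the second step the paper's argument is shorter and cleaner than yours. It does not track Tate twists or argue that the lattice ``absorbs'' one component; it simply observes that if $\reg_{\R}(\Z_{QR,P})$ were trivial in $H^{2g-1}_{\D}(\Jac(C),\R(g))$, then as a functional it would equal $\int_{\Gamma}(\cdot)$ for some real $2$-cycle $\Gamma$, and since $\Omega_{i,j}$ is a \emph{real} form this integral would be a real number. Hence a nonzero imaginary part of $\reg_{\Q}(\Z_{QR,P})(\Omega_{i,j})$ immediately witnesses nontriviality. Your version reaches the same conclusion but the sentence ``the lattice consists of purely imaginary values, so the real part is absorbed'' is internally inconsistent (if the lattice values were imaginary it would be the imaginary part that is absorbed); the paper's formulation avoids this by working directly with real forms and real cycles, where the pairing is manifestly real.

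One further remark: the obstacle you flag---that vanishing on every $\Omega_{i,j}$ should force $\reg_{\R}(\Z_{QR,P})=0$---is legitimate, and the paper does not address it either. In effect the paper's proof, like yours, establishes only the direction $\Leftarrow$ (nonzero imaginary part $\Rightarrow$ nontrivial regulator), which is the direction actually used in the applications to indecomposability later in the paper.
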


\begin{proof} We have 
$$H^{2g-1}_{\D}(\Jac(C),\R(g)) \cong \dfrac{F^1 H^2(\Jac(C),\C)^*}{H_2(\Jac(C),\R)}.$$
The $2$-form $\Omega_{i,j}$ lies in $F^1 H^2(\Jac(C),\C) \cap H^2(\Jac(C),\R)$.  Since it is in $F^1 H^2(\Jac(C),\C)$, we can consider $ \reg_{\R}(\Z_{QR,P})(\Omega_{i,j})$.  If $\reg_{\R}(\Z_{QR,P})\equiv 0$ in the Deligne cohomology group it lies in the image of $H_2(\Jac(C),\R)$. In that case, 
$$\reg_{\R}(\Z_{QR,P})(\cdot) =\int_{\Gamma} \cdot$$  
for some $2$-chain $\Gamma$ in $H_2(\Jac(C),\R)$. Since  $\Omega_{i,j} \in H^2(\Jac(C),\R)$, 
$$\reg_{\R}(\Z_{QR,P})(\Omega_{i,j}) = \int_{\Gamma} \Omega_{i,j} \in \R$$
So if 
$$\Im  \reg_{\R}(\Z_{QR,P})(\Omega_{i,j}) \neq 0$$
then $ \reg_{\R}(Z_{QR,P})$ is non-trivial. 

From Corollary \ref{maincor} we have 
$$\reg_{\Q}(\Z_{QR,P})(\overline{\w}_i \wedge \w_j)=2 \int_{\gamma_i}\log f_{QR,P} \w_j.$$
and 
$$ \reg_{\Q}(\Z_{QR,P})(\w_i \wedge \overline{\w}_j )=-\langle\reg_{\mathds{Q}}(\Z_{QR,P}),\overline{\w}_j \wedge \w_i \rangle=-2\int_{\gamma_j}\log f_{QR,P} \w_i.$$
Hence we obtain  
\begin{equation}\label{regr}
\Im\left(\reg_{\R}(\Z_{QR,P})(\Omega_{i,j})\right)=2 \Im\left(\int_{\gamma_{i}}\log f_{QR,P} \w_j - \int_{\gamma_{j}} \log f_{QR,P} \w_i\right)
\end{equation}
\end{proof}
\subsection{Hypergeometric functions}

Hypergeometric functions were introduced by Euler and generalised by Gauss, Appell and Thomae. In this  section we discuss certain properties of hypergeometric functions. In a later section we will relate certain values of hypergeometric functions to the regulator image of  some cycles. 

Let $\alpha \in \C\setminus \{0,-1,-2,\dots \}$ and $j$ be a non-negative
integer. Define the {\bf Pochhammer Symbol} to be 
$$(\alpha)_{j}=\alpha(\alpha+1)\cdots (\alpha+j-1)=\frac{\Gamma(\alpha+j)}
{ \Gamma(\alpha)}.$$ 
A generalised hypergeometric function of type $(n,n-1)$ is a function  defined by the series expansions 

$$\,_nF_{n-1}\left({{a_{1}, \dots, a_{n}} \atop {b_{1},\dots, b_{n-1}}};z \right)=\sum_{j \geq 0} \frac{(a_{1})_{j} \dots (a_{n})_{j}}{(b_{ 1 })_{j} \dots (b_{n-1})_{j}(1)_{j}}z^j.$$
It converges absolutely for $|z|<1$ and converges at $z=1$ if $\sum_{j=1}^{j=n-1}b_{j}-\sum_{j=1}^{j=n}a_{j}>0$. It has an analytic continuation to the entire complex plane.  References for this  are \cite{catn},\cite{beuk},\cite{beuk1}. 

We are mainly interested in hypergeometric functions of type $(3,2)$.  A hypergeometric function of type $(3,2)$ has  the following integral representation which allows one to analytically continue it to the entire complex plane $\C$. 

\begin{lem}\label{hyp-lem1}
\[\,_3F_{2}\left({{a_{1},a_{2},a_{3}} \atop {b_{1},b_{2}}};z\right)=\frac{\Gamma(b_{1})\Gamma(b_{2})}{\Gamma(a_{1})\Gamma(b_{1}-a_{1})\Gamma(a_{3})\Gamma(b_{2}-a_{3})}\times\]
\begin{equation}
\displaystyle{\iint_{\Delta}u^{a_{1}-1}(1-zu)^{-a_{2}}(1-u(1-v)^{-1})^{b_{1}-a_{1}-1}v^{b_{2}-a_{3}-1}(1-v)^{a_{3}-a_{1}-1}dudv}, 
\end{equation}
where $\Delta=\{(u,v)|u,v,1-u-v\geq0\}$ and branches of logarithm is chosen as follows- $$\arg(u)=0, \arg(1-u)=0 \text{ and  } \arg(1-zu)\leq\pi/2.$$ 
\end{lem}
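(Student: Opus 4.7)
The plan is to derive the identity by expanding the factor $(1-zu)^{-a_2}$ as its binomial series, interchanging sum and integral, and reducing the remaining double integral to a product of two Beta integrals. I would assume throughout the parameter restrictions $\Re(a_1),\Re(b_1-a_1),\Re(a_3),\Re(b_2-a_3)>0$ and $|z|<1$, under which all integrals and the series converge absolutely; the general case then follows by analytic continuation in the parameters and in $z$ (using the continuation property of $\,_3F_2$ already recorded). The prescribed branches of $\arg(u)$, $\arg(1-u)$ and $\arg(1-zu)$ are exactly those making the integrand a single-valued holomorphic function on the complement of the singular locus, so no additional care is needed in this initial domain.

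First I would perform the change of variables $u=s(1-v)$ on $\Delta$ with $(s,v)\in[0,1]^2$, whose Jacobian gives $du\,dv=(1-v)\,ds\,dv$. The awkward factor $(1-u(1-v)^{-1})^{b_1-a_1-1}$ becomes $(1-s)^{b_1-a_1-1}$, the factor $u^{a_1-1}$ becomes $s^{a_1-1}(1-v)^{a_1-1}$, and after collecting powers of $(1-v)$ the integrand separates as a product in $s$ and $v$. Substituting the binomial expansion $(1-zu)^{-a_2}=\sum_{j\geq 0}\tfrac{(a_2)_j}{j!}z^j s^j(1-v)^j$ and applying Fubini, the right-hand side of the asserted identity takes the form
\[
\frac{\Gamma(b_1)\Gamma(b_2)}{\Gamma(a_1)\Gamma(b_1-a_1)\Gamma(a_3)\Gamma(b_2-a_3)}\sum_{j\geq 0}\frac{(a_2)_j z^j}{j!}\,B(a_1+j,b_1-a_1)\,B(a_3+j,b_2-a_3).
\]

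Finally, applying $B(\alpha+j,\beta)=\Gamma(\alpha+j)\Gamma(\beta)/\Gamma(\alpha+\beta+j)$ together with $\Gamma(\alpha+j)/\Gamma(\alpha)=(\alpha)_j$, the $j$-th summand becomes
\[
\frac{\Gamma(a_1)\Gamma(b_1-a_1)\Gamma(a_3)\Gamma(b_2-a_3)}{\Gamma(b_1)\Gamma(b_2)}\cdot\frac{(a_1)_j(a_2)_j(a_3)_j}{(b_1)_j(b_2)_j\,j!}z^j,
\]
so the Gamma prefactor cancels exactly the constant in front of the sum, and what remains is the defining series of $\,_3F_2(a_1,a_2,a_3;b_1,b_2;z)$. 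The only genuine obstacle is the routine one of justifying the term-by-term integration and the subsequent extension to the full parameter range; however both sides are meromorphic in $(a_1,a_2,a_3,b_1,b_2)$ and holomorphic in $z$ off the prescribed cut, so the identity propagates automatically, and for the application to Fermat curves the relevant parameters are positive rationals in $(0,1)$ where the convergence hypotheses hold outright.
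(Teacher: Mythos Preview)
Your argument is correct. The substitution $u=s(1-v)$ straightens the simplex $\Delta$ to the square $[0,1]^2$, turns the factor $(1-u(1-v)^{-1})^{b_1-a_1-1}$ into $(1-s)^{b_1-a_1-1}$, and makes the powers of $1-v$ collapse to $(1-v)^{a_3-1}$; after the binomial expansion the double integral factorises as the product of the two Beta integrals you wrote, and the Gamma identities finish the job. The convergence conditions you impose are exactly the right ones, and the analytic-continuation remark is adequate for what follows.

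As for comparison: the paper does not actually prove this lemma. Its entire proof is the sentence ``See \S4.10, \cite{otsu2}.'' So you have supplied a genuine, self-contained derivation where the paper only gives a pointer to the literature. What you have written is in fact the standard way one obtains the double Euler integral for $\,_3F_2$, namely by iterating the Euler integral representation of Gauss's $\,_2F_1$; presumably the cited reference proceeds along the same lines, but nothing in the present paper lets one check that. Your version has the advantage of being directly verifiable and of making transparent which parameter inequalities are needed, which is useful since the later application has all parameters equal to positive rationals in $(0,1)$ or $(0,2)$.
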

\begin{proof}
See $\S4.10$, \cite{otsu2} . 
\end{proof}
\begin{comment}
Another interesting property of type $(3,2)$ hypergeometric function is Dixon's Formula \cite{slat}, which is the following 
\[\,_3F_{2}\left({{a_{1},a_{2},a_{3}} \atop {b_{1},b_{2}}};1\right)=\frac{\Gamma(b_{1})\Gamma(b_{2})\Gamma(s)}{\Gamma(a_{1})\Gamma(a_{2}+s)\Gamma(a_{3}+s)}\,_3F_{2}\left({{b_{1}-a_{1},b_{2}-a_{1},s} \atop {a_{2}+s,a_{3}}+s};1\right)\]
where $s=b_{1}+b_{2}-a_{1}-a_{2}-a_{3}.$
%

%

Let $\brac a $ denote the representative of $a$ modulo $N$ in  the set $\{1,\dots,N\}$. Define 
\begin{align*}
{\mathcal F} \left(\frac{a}{N},\frac{i}{N},\frac{b}{N}\right)=\dfrac{N}{i}{\beta(\frac{\brac{a+i}}{N},\frac{\brac{b}}{N})}
\,_3F_{2}\left({{\dfrac{a+i}{N},\dfrac{i}{N},1} \atop {\dfrac{a+b+i}{N},\dfrac{i}{N}+1}};1\right),  
\end{align*}
%

Then from lemma (\ref{hyp-lem1}) we have
%
\begin{align*}
\tilde{\mathcal F}\left(\frac{\brac{a}}{N},\frac{\brac{i}}{N},\frac{\brac{b}}{N}\right)
=&\underset{\Delta}{\iint}u^{\frac{i}{N}-1}(1-u)^{-1}v^{\frac{b}{N}-1}(1-v)^{\frac{a}{N}-1}dudv. 
\end{align*}
\end{comment}
\section{Specializing to Fermat Curves} 

\subsection{Fermat curves.}

Let $F_{N}$ be the Fermat curve of degree $N$ over $\Q$ defined by the homogeneous equation 
$$F_N:x^N+y^N=z^N.$$
The set of complex points $F_{N}(\C)$ forms a Riemann surface of genus $\frac{(N-1)(N-2)}{2}$. Let $\zeta_{N}=e^{2\pi i/N}$  be a primitive $N^{th}$ root of unity. The set of points with one of $x$, $y$ or $z$ being $0$ are $(\zeta^{r}_{N}:0:1)$, $(0:\zeta^{r}_{N}:1)$ and $(\zeta^{r}_{2N}:1:0)$, where $r\in\ZZ$. These  are called cusps or the points at infinity. There are $3N$ such points over $E_N=\Q(\zeta_{N})$.

Let  $G_{N}=\ZZ/N\ZZ \oplus \ZZ/N\ZZ $. We think of an element as $g_{N}^{r,s}$ with the group action being given multiplicatively 
$$g_{N}^{r,s} \cdot g_{N}^{t,u}=g_{N}^{r+t,s+u}.$$
The action of 
$G_N$ on $F_{N,E_{N}}:=F_N \times_{\Spec{\Q}} \Spec(E_N)$ is defined by 
\begin{align*} 
G_N \times F_{N,E_N} &\longrightarrow F_{N,E_N}\\
(g_{N}^{r,s},[x:y:z]) &\longrightarrow [\zeta_{N}^{r}x:\zeta_{N}^{s}y:z].
\end{align*}

\subsection{Differentials on Fermat curves.}
\label{differma}

For $a,b \geq1$ define
$$\omega_{N}^{a,b}=x^ay^{b-N}\frac{dx}{x}=-x^{a-N}y^b\frac{dy}{y}.$$
Then $\omega_{N}^{a,b}$ is a  differential form on the manifold $F_{N}(\C)$. The forms $\omega_{N}^{a,b}$ may have poles at the 
points at infinity. 

Let $I_N=\{(a,b) \in \ZZ/N\ZZ \times \ZZ/N\ZZ |a, b, a+b \neq 0\}$. For  $(a,b)\in I_{N}$ we define  $\omega^{a,b}_{N}=\omega^{\brac{a},\brac{b}}_{N}$, where $\brac{a}$ is the representative of $a \mod N$  in the set $\{1,\dots,N\}$. 

\begin{lem} Let $\omega_{N}^{a,b}$ be as above. We have
\label{hol-le}
\begin{enumerate}
\item $\{\omega_{N}^{a,b}: (a,b)\in I_{N}\}$ forms a basis for $H^1(F_{N}(\C),\C)$.
\item $\{\omega_{N}^{a,b}:(a,b)\in I_{N}, a+b < N\}$ forms a basis for  the  subspace of holomorphic forms $H^{1,0}(F_{N})$.
\item Let $a,b \in \ZZ$ and $1 \leq i \leq N$. Then for $j \geq 0$,  
$$\omega^{a+i+jN,b}_{N}=\frac{(\frac{a+i}{N})_j}{(\frac{a+i+b}{N})_j}\text{  }\omega^{a+i,b}_{N}$$
in $H^{1}(F_{N}(\C),\C)$.\label{lem-diff}
\end{enumerate}
\end{lem}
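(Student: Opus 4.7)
The plan is to first establish the counting, then verify linear independence using the action of $G_N$, then check the holomorphicity criterion, and finally deduce the relation in (3) by direct computation from the defining equation $x^N+y^N=z^N$.

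For parts (1) and (2) I would begin with a dimension count. The set $I_N$ consists of pairs $(a,b)\in\{1,\dots,N-1\}^2$ with $a+b\ne N$, giving $|I_N|=(N-1)^2-(N-1)=(N-1)(N-2)=2g=\dim H^{1}(F_{N}(\C),\C)$, and the subset with $a+b<N$ has cardinality $\binom{N-1}{2}=g=\dim H^{1,0}(F_{N})$. Next I would use the $G_{N}$ action: a direct calculation from $g_{N}^{r,s}\cdot[x:y:z]=[\zeta_{N}^{r}x:\zeta_{N}^{s}y:z]$ gives $(g_{N}^{r,s})^{*}\omega_{N}^{a,b}=\zeta_{N}^{ra+sb}\,\omega_{N}^{a,b}$, so each $\omega_{N}^{a,b}$ is an eigenvector for the character $\chi_{a,b}(r,s)=\zeta_{N}^{ra+sb}$, and these characters are pairwise distinct as $(a,b)$ ranges over $I_{N}\subset\ZZ/N\ZZ\oplus\ZZ/N\ZZ$. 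Eigenvectors for distinct characters are linearly independent, so by the dimension count $\{\omega_{N}^{a,b}:(a,b)\in I_{N}\}$ is a basis of $H^{1}(F_{N}(\C),\C)$, proving (1).

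For (2), once (1) is known, it suffices to show that $\omega_{N}^{a,b}$ is holomorphic at each of the $3N$ points at infinity precisely when $a+b<N$, and to invoke the dimension match. I would work in local coordinates at each of the three families of cusps. For a cusp of the form $(\zeta_{N}^{r}:0:1)$, the parameter $y$ is local, and from $x^{N}=1-y^{N}$ (in the affine chart $z=1$, after the translation $x\mapsto \zeta_{N}^{r}x$) one computes
\[
\omega_{N}^{a,b}=-y^{b-1}(1-y^{N})^{(a-N)/N}\,dy,
\]
which is holomorphic since $b\ge 1$. An analogous computation at $(0:\zeta_{N}^{r}:1)$ shows holomorphicity there because $a\ge 1$. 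The decisive case is the family $(\zeta_{2N}^{r}:1:0)$, where one passes to the affine chart $y=1$, uses $z$ as local parameter from $x^{N}=z^{N}-1$, and finds that $\omega_{N}^{a,b}$ acquires a factor of the form $z^{N-a-b-1}\,dz$ (up to a unit), which is holomorphic iff $a+b<N$. This gives the holomorphicity criterion, hence (2).

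For (3), I would prove the $j=1$ case and then induct. Working in the chart $z=1$ with $x^{N}+y^{N}=1$, differentiation gives $x^{N-1}dx=-y^{N-1}dy$. Two computations are central. First, $x^{N}=1-y^{N}$ yields
\[
\omega_{N}^{c+N,b}=x^{c+N-1}y^{b-N}\,dx=\omega_{N}^{c,b}-\omega_{N}^{c,b+N}
\]
as differential forms, where $c=a+i$. Second, the exact form $d(x^{c}y^{b})$, after eliminating $dy$ via $y^{N-1}dy=-x^{N-1}dx$, becomes $c\,\omega_{N}^{c,b+N}-b\,\omega_{N}^{c+N,b}$, so in $H^{1}(F_{N}(\C),\C)$ one has $c\,\omega_{N}^{c,b+N}=b\,\omega_{N}^{c+N,b}$. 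Substituting the second into the first and solving gives
\[
\omega_{N}^{c+N,b}=\frac{c}{c+b}\,\omega_{N}^{c,b}=\frac{(c/N)_{1}}{((c+b)/N)_{1}}\,\omega_{N}^{c,b},
\]
which is the $j=1$ case. The inductive step is immediate from the Pochhammer identity $(x)_{j+1}=x\,(x+1)_{j}$ applied to both numerator and denominator after using $\omega_{N}^{c+(j+1)N,b}=\omega_{N}^{(c+N)+jN,b}$.

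The main obstacle will be the holomorphicity check at the cusps $(\zeta_{2N}^{r}:1:0)$ in (2), since one must switch charts and choose a correct local parameter to read off the vanishing order; the other two families of cusps and the algebraic identities in (3) are straightforward computations from $x^{N}+y^{N}=z^{N}$.
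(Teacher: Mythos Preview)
Your argument is correct and essentially self-contained, whereas the paper simply cites Lang (Chapter~3, Theorems~2.1 and~2.2) for parts (1)--(2) and Otsubo (Lemma~4.19) for part (3). What you wrote is in fact the standard proof one finds in those references: the $G_N$-eigenform argument for (1), the local computation at the three families of cusps for (2), and the exact-form reduction for (3). So there is no genuine methodological difference, only a difference in presentation---you spell out what the paper outsources.

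One small gap to close in (1): eigenvectors for distinct characters are linearly independent only if they are all nonzero, and you have not yet checked that each $\omega_{N}^{a,b}$ represents a nonzero class in $H^{1}(F_{N}(\C),\C)$. The cleanest fix is to invoke the period formula already stated in the paper, $\int_{\gamma_{N}}\omega_{N}^{a,b}=\tfrac{1}{N}\beta(\tfrac{a}{N},\tfrac{b}{N})\neq 0$, which forces each class to be nonzero. You should also remark, before appealing to de~Rham cohomology, that each $\omega_{N}^{a,b}$ with $(a,b)\in I_{N}$ is of the second kind: your local expansion at the cusps over $z=0$ shows the pole order in the parameter $w$ is $a+b-N$, and since $(a,b)\in I_{N}$ excludes $a+b=N$ the residue there vanishes (the other two cusp families are already holomorphic). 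With those two sentences added, your proof of (1)--(3) is complete.
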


\begin{proof} 
Statements $1$ and  $2$ can be found in  Chapter $3$ of \cite{lang1}, Theorem~$2.1$ and Theorem~$2.2$ respectively. Proof of $3$  is given in Lemma~$4.19$ of \cite{otsu2}.
\end{proof}
Let $\delta_{N}$ be the path in $F_{N}(\C)$ defined by $$\delta_{N}:[0,1]\longrightarrow F_{N}(\C)$$  
$$t\rightarrow (\sqrt[N]{t}, \sqrt[N]{1-t}),$$  
where  the branches are taken in $\R_{\geq0}$. Let $\gamma_{N}$ be the path 
$$\label{h_1path}
\gamma_{N}=\frac{1}{N^2}\sum_{(r,s)\in G_{N}} (1-g_{N}^{r,0})(1-g_{N}^{0,s})\delta_{N}.
$$
$\gamma_{N}$ is a closed path  independent of the choice of $\zeta_{N}$. Hence it represents 
a homology class in $H_{1}(F_{N}(\C),\ZZ)$. 

The pairing 
$$\langle \cdot, \cdot  \rangle:H_{1}(F_{N}(\C),\ZZ) \times
H^{1}(F_{N}(\C),\C) \longrightarrow \C$$
\begin{equation}\label{homduala}
\langle\sigma, \omega \rangle=\int_{\sigma} \omega. 
\end{equation}
induces the de Rham isomorphism
$\Hom(H_{1}(F_{N}(\C),\ZZ),\C) \simeq  H^{1}(F_{N}(\C),\C)$. In particular for  $(a,b)\in I_{N} $ we have
\begin{equation}
\int_{\gamma_{N}}\omega^{a,b}_{N} =\frac{1}{N}\beta(\dfrac{\brac{a}}{N}, \dfrac{\brac{b}}{N}),
\label{betaformula}
\end{equation}
where $\beta(m,n)$ is the Beta function 
$$\beta(m, n)=\dfrac{\Gamma(m)\Gamma(n)}{\Gamma(m+n)}=\int_{0}^{1} u^{m-1}(1-u)^{n-1} du.$$
The group $G_N$ acts on $F_{N}(\C)$, therefore  acts on $H^{1}(F_{N}(\C),\C)$ and we have  $\omega^{a,b}_{N}$ is an eigenform for the action of $G_{N}$, namely
$$ g^{r,s}_N \omega^{a,b}_{N}=\zeta^{ar+bs}_{N}\omega^{a,b}_{N}.$$
We choose the following normalisation 
\begin{equation}\label{normal}
\tilde{\w}^{a,b}_{N}=\frac{\omega^{\brac{a},\brac{b}}_{N}}{\frac{1}{N}\beta(\frac{\brac a}{N}, \frac{\brac b}{N})} 
\end{equation}
and one has  
$$\overline{(\tilde{\w}^{a,b}_{N})}=\tilde{\w}^{-a,-b}_{N}$$

\begin{lem} Let $\tilde{\omega}^{a,b}_{N}$ be the differential form on the Fermat curves $F_{N}$. Then  the Poincar\'e dual of $\tilde{\w}_N^{a,b}$  is $\mu_{-a,-b}P^{-a,-b}_{N} \gamma_{N}$ where  $\mu_{a,b}=\dfrac{N^{2}(1-\zeta_{N}^{a})(1-\zeta^{b}_{N})}{1-\zeta_{N}^{a+b}}$ and $P_N^{a,b}\gamma_N$ is a projector defined below.  
\label{PD}
 \end{lem}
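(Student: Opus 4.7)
The plan is to exploit the $G_N$-equivariance of Poincar\'e duality: since $\{\tilde\omega_N^{a,b}\}_{(a,b)\in I_N}$ is an eigenbasis of $H^1(F_N,\C)$ for the $G_N$-action with distinct characters, each Poincar\'e dual is forced to lie in a one-dimensional isotypic line of $H_1(F_N,\C)$, so the problem reduces to computing a single scalar.

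First I would observe that the intersection pairing $(\omega,\eta)\mapsto\int_{F_N}\omega\wedge\eta$ on $H^1(F_N,\C)$ is $G_N$-invariant. Together with the eigen-formula $(g_N^{r,s})^*\tilde\omega_N^{a,b}=\zeta_N^{ar+bs}\tilde\omega_N^{a,b}$ from Section~\ref{differma}, this forces $\int_{F_N}\tilde\omega_N^{a,b}\wedge\tilde\omega_N^{c,d}=0$ unless $(c,d)\equiv(-a,-b)\pmod N$. Consequently the Poincar\'e dual of $\tilde\omega_N^{a,b}$ must lie in the $G_N$-eigen subspace of $H_1(F_N,\C)$ on which $g_N^{r,s}$ acts via $\zeta_N^{-ar-bs}$, which is precisely the image of the isotypic projector $P_N^{-a,-b}$ introduced below. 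Each such eigenspace is one-dimensional by Lemma~\ref{hol-le}(1), and $P_N^{-a,-b}\gamma_N$ is a nonzero element of it because orthogonality of eigen-projectors together with the normalization \eqref{normal} gives
\[
\int_{P_N^{-a,-b}\gamma_N}\tilde\omega_N^{-a,-b} \;=\; \int_{\gamma_N}\tilde\omega_N^{-a,-b} \;=\; 1.
\]
Hence $\mathrm{PD}(\tilde\omega_N^{a,b})=\mu\cdot P_N^{-a,-b}\gamma_N$ for a unique scalar $\mu$.

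Pairing both sides against $\tilde\omega_N^{-a,-b}$ and invoking the defining property of Poincar\'e duality reads off $\mu=\int_{F_N}\tilde\omega_N^{a,b}\wedge\tilde\omega_N^{-a,-b}$. The remaining task is to evaluate this single intersection number and check that it equals $\mu_{-a,-b}$. I would do this by passing to the unnormalized forms and computing $\int_{F_N}\omega_N^{a,b}\wedge\omega_N^{-a,-b}$ using the cell decomposition of $F_N$ by the $N^2$ translates $g_N^{r,s}\delta_N$: after substituting $\gamma_N=\frac{1}{N^2}\sum_{(r,s)}(1-g_N^{r,0})(1-g_N^{0,s})\delta_N$ and using the period formula \eqref{betaformula}, the inner double sum collapses via geometric-series identities and produces the rational factor $(1-\zeta_N^{-a})(1-\zeta_N^{-b})/(1-\zeta_N^{-a-b})$ together with $\beta(a/N,b/N)\beta((N-a)/N,(N-b)/N)$. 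Dividing by the two Beta-function normalizations hidden in $\tilde\omega_N^{\pm a,\pm b}$ then restores the prefactor $N^2$ and yields exactly $\mu_{-a,-b}$.

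The main obstacle is this last intersection calculation, where one must keep track of signs and orientations of the four pieces $1,\,g^{r,0},\,g^{0,s},\,g^{r,s}$ appearing in the expansion of $\gamma_N$ and of their images among the cells $g_N^{r,s}\delta_N$. However $G_N$-symmetry alone forces the answer to have the shape $C\cdot(1-\zeta_N^{-a})(1-\zeta_N^{-b})/(1-\zeta_N^{-a-b})$ for some $(a,b)$-independent constant $C$, so only the normalization $C=N^2$ needs pinning down; this can be checked, for example, by specializing to $N=3$ where $F_3$ is an elliptic curve with known periods, or by cross-referencing the classical Fermat-curve intersection computations in the work of Rohrlich and Otsubo.
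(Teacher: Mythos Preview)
Your approach is essentially the same as the paper's: both verify the orthogonality relation $\int_{P_N^{c,d}\gamma_N}\tilde\omega_N^{a,b}=\delta_{(a,b),(c,d)}$ by averaging over $G_N$, and then reduce the problem to the single intersection number $\int_{F_N}\tilde\omega_N^{a,b}\wedge\tilde\omega_N^{-a,-b}$. The paper does not compute this number itself; it simply quotes Proposition~4.2 of \cite{otsu1}, which is precisely the fallback you name at the end.

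One caution about your sketched direct computation of that number: the translates $g_N^{r,s}\delta_N$ are $1$-chains, not $2$-cells, so ``substituting $\gamma_N=\frac{1}{N^2}\sum(1-g_N^{r,0})(1-g_N^{0,s})\delta_N$'' into the surface integral $\int_{F_N}\omega\wedge\eta$ does not literally make sense; periods over $\gamma_N$ alone cannot recover a cup product. A genuine direct argument proceeds via the Riemann bilinear relations on a polygon presentation of $F_N$ (this is how Rohrlich and Otsubo do it), not via the formula for $\gamma_N$. Since your stated fallback---citing Otsubo---is exactly what the paper does, this does not affect the overall validity of your proof.
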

\begin{proof} A form $\eta \in H^{1}(F_{N},\C)$ is the  Poincar\'{e} dual to a  path $\alpha\in H_{1}(F_{N},\C)$ if and only if 
$$\int_{F_{N}} \w \wedge\eta=\int_{\alpha} \w$$
for all $\w \in H^{1}(F_{N},\C)$. 

We consider the  case when  $\w=\tilde{\omega}^{a,b}_{N}$ and $\eta=\tilde{\omega}^{c,d}_{N}$.
Let
$$P^{c,d}_{N}\gamma_{N}:=\frac{1}{N^2} \sum_{(r,s) \in G_N} \zeta^{-(cr+ds)}_{N}(g_{N}^{r,s})\gamma_{N} .$$
One has
$$\int_{g^{r,s}_N \gamma_N} \tilde{\w}_{N}^{a,b} = \int_{\gamma_{N}}  g_N^{r,s}\tilde{\omega}_{N}^{a,b}
=\zeta_N^{ar+bs}\int_{\gamma_{N}}\tilde{\omega}^{a,b}_{N}
=\zeta_N^{ar+bs}$$
Therefore
$$
\int_{P^{c,d}_N\gamma_{N}} \tilde{\omega}^{a,b}_{N}= \int_{\gamma_{N}} (P^{c,d}_N)^*\tilde{\omega}^{a,b}_{N}=\frac{1}{N^2} \sum_{(r,s)\in G_N} \zeta_N^{(a-c)r+(b-d)s}
= \begin{cases} 1 & \text{ if } (c,d)=(a,b) \\
0 &   \text{ otherwise.} \end{cases} $$
Computing the action of $G_N$ on the exterior product along with the calculation in Proposition 4.2 of \cite{otsu1} one has 
$$\int_{F_{N}(\C)}\tilde{\omega}^{a,b}_{N}\wedge\tilde{ \omega}^{c,d}_{N}=\begin{cases} \dfrac{N^{2}(1-\zeta_{N}^{-c})(1-\zeta^{-d}_{N})}{1-\zeta_{N}^{-c-d}}& \text{ if }(c,d)=(-a,-b)\\ 0 & \text{ otherwise} \end{cases}$$
Hence $\dfrac{N^{2}(1-\zeta_{N}^{-c})(1-\zeta^{-d}_{N})}{1-\zeta_{N}^{-c-d}}P^{-c,-d}_{N} \gamma_{N}$ is the Poincar\'{e} dual of $\tilde{\omega}^{c,d}_{N}$

\end{proof}

\section{Main Theorem}

In this section  we obtain an explicit formula for the regulator of the element $\Z_{QR,P}$ in $H^{2g-1}_{\mathcal{M}}(\Jac(F_N),\Q(g))$ when $Q$ and $R$ are particular points at infinity.  Let $Q=(1:0:1)$, $R=(0:1:1)$ be two points at infinity on $F_N(\C)$. Let $P=(\frac{1}{\sqrt[N]{2}}:\frac{1}{\sqrt[N]{2}}:1)$. The function  
$$f_{QR,P}(x:y:z)=\frac{z-x}{z-y}$$ 
satisfies the property that 
$$\div(f_{QR,P})=NQ-NR \text{ and }  f_{QR,P}(P)=1.$$ 
From \eqref{mote-ele} we can use $f_{QR,P}$ to construct $\Z_{QR,P} \in H_{\mathcal{M}}^{2g-1}(\Jac(F_{N}),\mathds{Q}(g))$. 

Let $\tilde{\omega}^{a,b}_N$ with $a+b<N$ be the  holomorphic differential $1$ form on $F_{N}$ defined above.  Let $\gamma_N$  be the homology class defined above.  Let  $\Gamma_{N} \in H^{1}(F_{N})$ be the  Poincar\'e dual of $\gamma_{N}$. Define 
$$\Omega^{a,b}_{N}=\Gamma_N \wedge \tilde{\omega}^{a,b}_{N} \in \displaystyle F^{1}(\wedge^{2}H^{1}(F_{N},\C)).$$ 
From Corollary \ref{maincor} one has
$$
\label{main-exp}
\reg_{\Q}(\Z_{QR,P})(\Omega^{a,b}_{N})=2\displaystyle \int_{\gamma_{N}}\log{\frac{1-x}{1-y}}\tilde{\omega}^{a,b}_{N}.$$
In the next lemma we explicitly evaluate this integral. 

\begin{lem}\label{keylem} Let $\omega^{a,b}_{N}$ be a  holomorphic $1$ form on $F_{N}(\C)$ and $\gamma_{N}$ be the path in $F_{N}(\C)$ as above. Then 
$$\int_{\gamma_{N}}\log(1-x) \omega^{a,b}_{N}=-\sum_{j=1}^{N} \dfrac{ \beta(\dfrac{a+j}{N},\dfrac{b}{N})}{jN} \,_3F_{2}\left({{\dfrac{a+j}{N},\dfrac{j}{N},1} \atop {\dfrac{a+b+j}{N},\dfrac{j}{N}+1}};1\right)$$
$$\int_{\gamma_{N}}\log(1-y) \omega^{a,b}_{N}=-\sum_{j=1}^{N} \dfrac{\beta(\dfrac{b+j}{N},\dfrac{a}{N})}{jN} \,_3F_{2} \left({\dfrac{b+j}{N},\dfrac{j}{N},1} \atop {\dfrac{a+b+j}{N},\dfrac{j}{N}+1};1\right).$$
\end{lem}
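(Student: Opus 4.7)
The plan is to expand $\log(1-x)$ as a Taylor series, integrate termwise against $\gamma_{N}$, and rewrite the resulting double series in $_{3}F_{2}$ form using Lemma~\ref{hol-le}(3). Writing $\log(1-x)=-\sum_{j\geq 1}x^{j}/j$, I need to justify interchanging sum and integral over $\gamma_{N}$. On $\delta_{N}$ the variable $x=t^{1/N}$ is in $[0,1]$, so the partial sums are dominated by $-\log(1-x)$, which is integrable against the factor $(1-t)^{b/N-1}$ appearing in $\omega_{N}^{a,b}|_{\delta_{N}}$, and monotone convergence applies. On each $G_{N}$-translate $g_{N}^{r,s}\delta_{N}$ with $r\neq 0$ the pulled-back integrand is $\log(1-\zeta_{N}^{r}x)\,\omega_{N}^{a,b}$, and an Abel-summation bound $|\sum_{j=1}^{M}(\zeta_{N}^{r}x)^{j}/j|\leq C_{r}/|1-\zeta_{N}^{r}x|$ uniform in $M$ (using that $|1-\zeta_{N}^{r}x|$ is bounded below on $[0,1]$ for $r\neq 0$) supplies the required majorant for dominated convergence. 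After the interchange,
$$\int_{\gamma_{N}}\log(1-x)\,\omega_{N}^{a,b}=-\sum_{j\geq 1}\frac{1}{j}\int_{\gamma_{N}}\omega_{N}^{a+j,b},$$
since $x^{j}\omega_{N}^{a,b}=\omega_{N}^{a+j,b}$ as differential forms.

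For each $j\geq 1$, I write $j=j_{0}+kN$ with $j_{0}\in\{1,\dots,N\}$ and $k\geq 0$. Lemma~\ref{hol-le}(3) gives
$$\omega_{N}^{a+j_{0}+kN,b}=\frac{(\tfrac{a+j_{0}}{N})_{k}}{(\tfrac{a+j_{0}+b}{N})_{k}}\,\omega_{N}^{a+j_{0},b}\quad\text{in }H^{1}(F_{N}(\C),\C),$$
and since $\gamma_{N}$ is a closed cycle the integration against it depends only on the cohomology class; combined with the period formula \eqref{betaformula} this yields
$$\int_{\gamma_{N}}\omega_{N}^{a+j_{0}+kN,b}=\frac{(\tfrac{a+j_{0}}{N})_{k}}{(\tfrac{a+j_{0}+b}{N})_{k}}\cdot\frac{1}{N}\beta\!\left(\frac{a+j_{0}}{N},\frac{b}{N}\right).$$
Regrouping the $j$-sum as a double sum over $(j_{0},k)$ and rewriting $\frac{1}{j_{0}+kN}=\frac{1}{j_{0}}\cdot\frac{(j_{0}/N)_{k}}{(j_{0}/N+1)_{k}}$ produces
$$-\sum_{j_{0}=1}^{N}\frac{\beta(\tfrac{a+j_{0}}{N},\tfrac{b}{N})}{j_{0}N}\sum_{k\geq 0}\frac{(\tfrac{a+j_{0}}{N})_{k}(\tfrac{j_{0}}{N})_{k}(1)_{k}}{(\tfrac{a+b+j_{0}}{N})_{k}(\tfrac{j_{0}}{N}+1)_{k}(1)_{k}},$$
which is the asserted $\,_{3}F_{2}$ expression.

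The second formula, for $\log(1-y)$, follows from the first by the involution $\sigma\colon(x,y)\mapsto(y,x)$ of $F_{N}$: one verifies directly that $\sigma^{*}\omega_{N}^{a,b}=-\omega_{N}^{b,a}$, and since $\sigma(g_{N}^{r,s}\delta_{N})=-g_{N}^{s,r}\delta_{N}$, relabelling the summation indices gives $\sigma_{*}\gamma_{N}=-\gamma_{N}$. Combined with $\sigma^{*}\log(1-y)=\log(1-x)$, substituting the first identity with $(a,b)$ replaced by $(b,a)$ yields the claim for $\log(1-y)$.

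The main technical obstacle is the rigorous justification of the termwise integration, since the Taylor series for $\log(1-x)$ converges only conditionally on the boundary $|x|=1$, which is reached along $\delta_{N}$ at $t=1$; the Abel-summation estimate above is precisely what is needed to push past this boundary. Once that is in place the rest of the argument reduces to a direct rearrangement of an absolutely convergent double series into standard hypergeometric form.
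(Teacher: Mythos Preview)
Your argument is correct and follows essentially the same approach as the paper: Taylor-expand the logarithm, regroup the resulting series by residues modulo $N$, and identify each inner sum as a value $\,_3F_2(\,\cdot\,;1)$ via Lemma~\ref{hol-le}(3) and the identity $\tfrac{1}{j_0+kN}=\tfrac{1}{j_0}\cdot\tfrac{(j_0/N)_k}{(j_0/N+1)_k}$. The only organisational difference is that the paper carries out the computation on each translate $g_N^{r,s}\delta_N$ separately and then averages over $G_N$ using root-of-unity identities, whereas you integrate directly against the closed cycle $\gamma_N$ and invoke the period formula~\eqref{betaformula}; your explicit dominated/monotone convergence justification for the term-wise integration and the $(x,y)\mapsto(y,x)$ symmetry argument for the second formula are refinements the paper omits.
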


\begin{proof}
Recall from Section \ref{h_1path},  $\gamma_{N}$ is the path in $F_{N}(\C)$ 
$$ \gamma_{N}=\frac{1}{N^2}\sum_{(r,s)\in G_{N}} (1-g_{N}^{r,0})(1-g_{N}^{0,s})\delta_{N},$$
where 
\begin{align*}
\delta_{N}:[0,1]\rightarrow &F_{N}(\C)\\
t\rightarrow&(t^{1/N},(1-t)^{1/N}).
\end{align*}

Let $\delta_{N}^{*}(\omega^{a,b}_{N})$ and $\delta_{N}^{*}f$ be the pullbacks of the differential form $\omega^{a,b}_{N}$ and the rational function $f$ respectively. In particular for $f=1-x$ and $g^{r,s}_{N}\in G_{N}$, one has
\begin{align*}
g^{r,s}_{N}\delta_{N}^{*}(1-x)=&1-\zeta^{r}_{N}t^{1/N}\\
g^{r,s}_{N}\delta_{N}^{*}(1-y)=&(1-\zeta^{s}_{N}(1-t)^{1/N})\\
g^{r,s}_{N}\delta_{N}^{*}(\omega^{a,b}_{N})=&\frac{1}{N}\zeta_{N}^{ar+bs}t^{a/N-1}(1-t)^{b/N-1}dt.
\end{align*}
Since, $|\zeta^{r}_{N}t^{1/N}|\leq1$ and $|\zeta^{r}_{N}(1-t)^{1/N}|\leq1$ for $t\in [0,1]$, we have 
\begin{align*}
\log{g^{r,s}_{N}\delta_{N}^{*}(1-x)}=&\log(1-\zeta_{N}^{r}t^{1/N})=-\sum_{j=1}^{\infty} \dfrac{\zeta^{rj}_{N}t^{j/N}}{j}\\
\log{g^{r,s}_{N}\delta_{N}^{*}(1-y)}=&\log(1-\zeta_{N}^{s}(1-t)^{1/N})=-\sum_{j=1}^{\infty} \dfrac{\zeta^{sj}_{N}(1-t)^{j/N}}{j}.
\end{align*}

We first compute the integral over $g_{N}^{r,s} \delta_N$.

\begin{align*}
\int_{g^{r,s}_{N}\delta_{N}}\log{(1-x)}\omega^{a,b}_{N}=& \int_{0}^{1} \log(g^{r,s}_{N}\delta_{N}^{*}(1-x))g_{N}^{r,s}\delta_{N}^{*}(\omega^{a,b}_{N})\\
=&-\int_{0}^{1}\sum_{j=1}^{\infty} \dfrac{\zeta^{(a+j)r+bs}_{N}}{j} \dfrac{1}{N} t^{\frac{a+j}{N}-1}(1-t)^{\frac{b}{N}-1}dt\\
=&-\int_{\delta_{N}}\sum_{j=1}^{\infty} \dfrac{\zeta^{(a+j)r+bs}_{N}}{j} \omega^{a+j,b}_{N}\\
\end{align*}
We can break up the sum over $j$ into a sum 
$$\displaystyle{\sum_{j=1}^{N} \sum_{k=0}^{\infty} \dfrac{\zeta^{(a+j+kN)r+bs}_N}{j+kN} \omega^{a+j+kN,b}_N}.$$
From Lemma \ref{hol-le}  we have 
$$\omega^{a+j+kN,b}_{N}=\dfrac{(\frac{a+j}{N})_k}{(\frac{a+j+b}{N})_k}\,\omega^{a+j,b}_{N}.$$
Further,  
$$\dfrac{(\frac{j}{N})_k}{(\frac{j}{N}+1)_k}=\dfrac{\frac{j}{N}}{(\frac{j}{N}+k)}$$
therefore 
$$ \dfrac{1}{j+kN}=\dfrac{1}{N(\frac{j}{N}+k)}=\dfrac{1}{j} \cdot \dfrac{(\frac{j}{N})_k}{(\frac{j}{N}+1)_k}$$
So the integrand becomes 
$$\displaystyle{\sum_{j=1}^{N} \sum_{k=0}^{\infty} \dfrac{\zeta^{(a+j+kN)r+bs}_N}{j+kN} \omega^{a+j+kN,b}_N= \sum_{j=1}^{N} \dfrac{\zeta_N^{(a+j)r+bs}}{j} \sum_{k=0}^{\infty} \dfrac{(\frac{a+j}{N})_k (\frac{j}{N})_k }{(\frac{a+j+b}{N})_k (\frac{j}{N}+1)_k}\,\omega^{a+j,b}_{N}} $$
The convergence of  inner sum does not depend on the parameter $j$ and can be expressed as a special value of a hypergeometric series 
$$\sum_{k=0}^{\infty} \dfrac{(\frac{a+j}{N})_k (\frac{j}{N})_k }{(\frac{a+j+b}{N})_k (\frac{j}{N}+1)_k}= \,_3F_{2}\left({{\frac{a+j}{N},\frac{j}{N},1} \atop {\frac{a+b+j}{N},\frac{j}{N}+1}};1\right)$$
Using this in the integral  we get 
\begin{equation}
\int_{g^{r,s}_{N}\delta_{N}}\log{(1-x)}\omega^{a,b}_{N}=- \sum_{j=1}^{N}\dfrac{\zeta_{N}^{(a+j)r+bs}}{j} \,_3F_{2}\left({{\frac{a+j}{N},\frac{j}{N},1} \atop {\frac{a+b+j}{N},\frac{j}{N}+1}};1\right) \int_{\delta_{N}} \omega^{a+j,b}_{N}.\label{intdeltaN}
\end{equation}
Note that the hypergeometric term does not depend on $r$ and $s$. We can now compute the integral over $\gamma_N$. 

\begin{align*}
\int_{\gamma_{N}}\log(1-x)\omega^{a,b}_{N}=&\dfrac{1}{N^2}\sum_{(r,s)\in G_{N}}\displaystyle(\int_{\delta_{N}}\log{(1-x)}\omega^{a,b}_{N}
-\int_{g^{r,0}_{N}\delta_{N}}\log{(1-x)}\omega^{a,b}_{N}\\
&+\int_{g^{r,s}_{N}\delta_{N}}\log{(1-x)}\omega^{a,b}_{N}-\int_{g^{0,s}_{N}\delta_{N}}\log{(1-x)}\omega^{a,b}_{N})\\
&=- \dfrac{1}{N^2}\sum_{j=1}^{N}\left(  \dfrac{1}{j} \,_3F_{2}\left({{\frac{a+j}{N},\frac{j}{N},1} \atop {\frac{a+b+j}{N},\frac{j}{N}+1}};1\right)\int_{\delta_N} \omega_N^{a+j,b} \right) \times \\
&\left(\sum_{(r,s)\in G_{N}}\left( 1-\zeta_{N}^{(a+j)r}+  \zeta_{N}^{(a+j)r+bs}- \zeta_{N}^{bs}\right) \right)
\end{align*}
 From the definition of the $\beta$-function and $\delta_N$, we get  
$$\int_{\delta_N} \omega_N^{a+j,b}= \dfrac{1}{N} \beta(\dfrac{a+j}{N},\dfrac{b}{N}).$$
Further, for a fixed $j$, since $a$ and $b$ are non-zero, 
$$\sum_{(r,s)\in G_{N}}(1-\zeta_{N}^{(a+j)r}+  \zeta_{N}^{(a+j)r+bs}- \zeta_{N}^{bs})=N^2,$$
since the sum over non-trivial roots of unity is $0$. So finally this leads to expression 
$$\int_{\gamma_{N}}\log(1-x)\omega^{a,b}_{N}=-\sum_{j=1}^{N} \dfrac{1}{jN} \,_3F_{2}\left({{\dfrac{a+j}{N},\dfrac{j}{N},1} \atop {\dfrac{a+b+j}{N},\dfrac{j}{N}+1}};1\right) \beta(\frac{a+j}{N},\frac{b}{N}).$$
A similar calculation with $y$ in the place of $x$ gives the second expression. 
\end{proof}

\begin{thm}\label{main-thm}
Let $X=\Jac(F_{N})$ where $F_{N}$ is the Fermat curve of degree $N$ defined over $\mathds{Q}$. Let $\Z_{QR,P}\in H^{2g-1}_{\M}(X,\mathds{Q}(g))$ 
be the element constructed in $\S3.1.1$, \eqref{mote-ele}. Then one has 
$$\reg_{\mathds{Q}}(\Z_{QR,P})(\Omega^{a,b}_{N})=2\sum_{j=1}^{N}\left[\mathcal F\left(\frac{b}{N},\frac{j}{N},\frac{a}{N}\right)- \mathcal{F}\left(\frac{a}{N},\frac{j}{N},\frac{b}{N}\right)\right]$$
where $a$ and $b$ are positive integers with $a+b<N$,  $\Omega^{a,b}_{N}=\Gamma_{N}\wedge\tilde{\omega}^{a,b}_{N}\in F^{1}(\wedge^{2}H^{1}(F_{N},\C))$ and
$${\mathcal F} \left(\frac{a}{N},\frac{j}{N},\frac{b}{N}\right)=\dfrac{1}{j}\dfrac{\beta(\frac{a+j}{N},\frac{b}{N})}{\beta(\frac{a}{N},\frac{b}{N})}
\,_3F_{2}\left({{\dfrac{a+j}{N},\dfrac{j}{N},1} \atop {\dfrac{a+b+j}{N},\dfrac{j}{N}+1}};1\right)$$
is a special value of the  hypergeometric function $\,_3F_{2}$.
\end{thm}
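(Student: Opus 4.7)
The plan is to combine Corollary \ref{maincor} with the explicit integral evaluation already carried out in Lemma \ref{keylem}. Since $\Gamma_N$ is the Poincar\'e dual of $\gamma_N$, I apply Corollary \ref{maincor} with $\eta=\Gamma_N$ (so $\alpha=\gamma_N$) and $\omega=\tilde{\omega}^{a,b}_N$ to get
$$\reg_{\Q}(\Z_{QR,P})(\Omega^{a,b}_N) = 2\int_{\gamma_N} \log(f_{QR,P})\,\tilde{\omega}^{a,b}_N.$$
This reduces the problem to evaluating a concrete integral on the Fermat curve along the path $\gamma_N$.

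Next I pass to the affine chart $z=1$ where $f_{QR,P}(x,y)=(1-x)/(1-y)$, and split the integrand as $\log f_{QR,P}=\log(1-x)-\log(1-y)$, with the branch of each logarithm fixed by $f_{QR,P}(P)=1$. Along the pieces $g^{r,s}_N\delta_N$ making up $\gamma_N$ one has $|x|=t^{1/N}\le 1$ and $|y|=(1-t)^{1/N}\le 1$, so the branches coincide with the ones implicitly used in the proof of Lemma \ref{keylem} via the series $\log(1-u)=-\sum_{j\ge 1}u^j/j$. Hence no branch issue arises and I may apply Lemma \ref{keylem} termwise to the integrals $\int_{\gamma_N}\log(1-x)\,\omega^{a,b}_N$ and $\int_{\gamma_N}\log(1-y)\,\omega^{a,b}_N$.

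To convert $\omega^{a,b}_N$ to $\tilde{\omega}^{a,b}_N$ I divide through by the normalisation constant $\frac{1}{N}\beta(\frac{a}{N},\frac{b}{N})$ from \eqref{normal}, using $\beta(\frac{a}{N},\frac{b}{N})=\beta(\frac{b}{N},\frac{a}{N})$ so that the same denominator serves both integrals. Each term of Lemma \ref{keylem} then becomes, up to a sign, the quantity $\mathcal F(\frac{a}{N},\frac{j}{N},\frac{b}{N})$ (respectively $\mathcal F(\frac{b}{N},\frac{j}{N},\frac{a}{N})$ for the $1-y$ integral). Assembling the signs from $\log f_{QR,P}=\log(1-x)-\log(1-y)$ and the factor $2$ from Corollary \ref{maincor}, I obtain the claimed expression
$$\reg_{\Q}(\Z_{QR,P})(\Omega^{a,b}_N)=2\sum_{j=1}^N\Bigl[\mathcal F\bigl(\tfrac{b}{N},\tfrac{j}{N},\tfrac{a}{N}\bigr)-\mathcal F\bigl(\tfrac{a}{N},\tfrac{j}{N},\tfrac{b}{N}\bigr)\Bigr].$$

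There is no real obstacle here: the heavy lifting (expanding $\log$ as a power series, summing the resulting $k$-series using the Pochhammer identities of Lemma \ref{hol-le}(3), and recognising the inner sum as ${}_3F_2$) has already been done inside Lemma \ref{keylem}. The only things demanding care in the write-up are the identification of $\alpha=\gamma_N$ as the Poincar\'e dual of $\eta=\Gamma_N$ under the conventions of Corollary \ref{maincor}, and the consistency of the logarithm branch along $\gamma_N$ with the power-series expansion; both are straightforward.
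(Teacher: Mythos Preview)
Your proposal is correct and follows essentially the same route as the paper: invoke Corollary \ref{maincor} to reduce the regulator to $2\int_{\gamma_N}\log\frac{1-x}{1-y}\,\tilde{\omega}^{a,b}_N$, apply Lemma \ref{keylem} to each piece, and renormalise via \eqref{normal} to recognise the $\mathcal{F}$-terms. Your remarks on the branch of the logarithm and the symmetry $\beta(\frac{a}{N},\frac{b}{N})=\beta(\frac{b}{N},\frac{a}{N})$ are more explicit than the paper's own write-up but change nothing substantive.
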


\begin{proof}
We know from \eqref{normal} that 
$$\tilde{\w}^{a,b}_{N}=\frac{\omega^{\brac{a},\brac{b}}_{N}}{\frac{1}{N}\beta(\frac{\brac a}{N}, \frac{\brac b}{N})} .$$
From Lemma (\ref{keylem}) we have 
\begin{align}\label{eq-1}
\int_{\gamma_{N}}\log(1-x)\tilde{\omega}^{a,b}_{N}=&-\sum_{j=1}^{N}\dfrac{1}{j}\,_3F_{2}\left({{\dfrac{a+j}{N},\dfrac{j}{N},1} \atop {\dfrac{a+b+j}{N},\dfrac{j}{N}+1,1}}\right)\dfrac{\beta(\frac{a+j}{N},\frac{b}{N})}{\beta(\frac{a}{N},\frac{b}{N})}
\end{align}
Similarly 
\begin{equation}\label{eq-2}
\int_{\gamma_{N}}\log(1-y)\tilde{\omega}^{a,b}_{N}=-\sum_{j=1}^{N}\dfrac{1}{j} \,_3F_{2}\left({{\dfrac{b+j}{N},\dfrac{j}{N},1} \atop {\dfrac{a+b+j}{N},\dfrac{j}{N}+1}}
;1\right)\dfrac{\beta(\frac{b+j}{N},\frac{a}{N})}{\beta(\frac{a}{N},\frac{b}{N})}.
\end{equation} 
The difference of these two expressions gives us our formula 
\end{proof}

\section{Indecomposablilty of the cycles.}

Recall  that a cycle $\Z\in H^{2g-1}_{\M}(X,\Q(g))$ is indecomposable if it is non-zero in the group  
$$H^{2g-1}_{\M}(X,\Q(g))/ H^{2g-1}_{\M}(X,\Q(g))_{dec}$$
where  $H^{2g-1}_{\M}(X,\Q(g))_{dec}$ is the subgroup of decomposable cycles - namely cycles coming from other motivic cohomology groups which map to it. One way to check this is to compute its real regulator and verify that it does not lie in the image of the regulators of decomposable cycles. The real regulators of decomposable cycles are non-zero only when computed against Hodge classes - hence if we can show that the real regulator of our cycle is non-zero when computed against a $(1,1)$-form orthogonal to the Hodge cycles, it would show that $\Z$ is indecomposable. 

In this section we provide numerical evidence in our case for the cycle $\Z_{QR,P}$ for certain $N$. We use a theorem of Aoki \cite{aoki1} which gives an explicit description of the 
Hodge classes in  $\otimes^{2}H^{1}(C)$ and in particular in $\wedge^{2}H^{1}(C)$.  

\begin{thm}
\label{main-thm}
Let $X=\Jac(F_{N})$ where $F_{N}$ is the Fermat curve of degree $N$ defined over $\Q$. Let $\Z_{QR,P}\in H^{2g-1}_{\M}(X,\Q(g))$ 
be the element constructed in  \eqref{mote-ele}. Let 
$$\Omega^{a,b,c,d}_{N}=\tilde{\w}^{a,b}_{N}\wedge\tilde{\w}^{-c,-d}_{N}+\tilde{\w}^{-a,-b}_{N}\wedge\tilde{\w}^{c,d}_{N}$$
with $a+b<N$ and $c+d<N$ be a $(1,1)$ form in $F^1 \wedge^2 H^1(C,\C)$. Then one has 
\[ \Im(\reg_{\mathds{R}}(\Z_{QR,P})(\Omega^{a,b,c,d}_{N}))=2\delta_{a,c}[\dfrac{\mu_{a,b}}{\brac{b-d}}\,_3F_{2}\left({{\dfrac{d+\brac{b-d}}{N},\dfrac{\brac{b-d}}{N},1} \atop {\dfrac{d+c+\brac{b-d}}{N},\dfrac{\brac{b-d}}{N}+1}};1\right)\frac{
\beta(\frac{d+\brac{b-d}}{N},\frac{c}{N})}{\beta(\frac{c}{N},\frac{d}{N})}-\]
\[\dfrac{\mu_{c,d}}{\brac{d-b}}\,_3F_{2}\left({{\dfrac{b+\brac{d-b}}{N},\dfrac{\brac{d-b}}{N},1} \atop {\dfrac{b+a+\brac{d-b}}{N},\dfrac{\brac{d-b}}{N}+1}};1\right)\frac{
\beta(\frac{b+\brac{d-b}}{N},\frac{a}{N})}{\beta(\frac{a}{N},\frac{b}{N})}]+\]
\[2\delta_{b,d}[\dfrac{\mu_{c,d}}{\brac{c-a}}\,_3F_{2}\left({{\dfrac{a+\brac{c-a}}{N},\dfrac{\brac{c-a}}{N},1} \atop {\dfrac{a+b+\brac{c-a}}{N},\dfrac{\brac{c-a}}{N}+1}};1\right)\frac{
\beta(\frac{a+\brac{c-a}}{N},\frac{b}{N})}{\beta(\frac{a}{N},\frac{b}{N})}-\]
\[\dfrac{\mu_{a,b}}{\brac{a-c}}\,_3F_{2}\left({{\dfrac{c+\brac{a-c}}{N},\dfrac{\brac{a-c}}{N},1} \atop {\dfrac{c+d+\brac{a-c}}{N},\dfrac{\brac{a-c}}{N}+1}};1\right)\frac{
\beta(\frac{c+\brac{a-c}}{N},\frac{d}{N})}{\beta(\frac{c}{N},\frac{d}{N})}]\]

where  $\delta_{\star,\star}$ is the Kronecker delta function and   $\brac a $ denotes the integer which represents $a\, \mod\, N$ in  the set $\{1,2,,N\}.$
\end{thm}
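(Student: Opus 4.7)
The plan is to apply Theorem~\ref{reg-for1} and extend the integral calculation of Lemma~\ref{keylem} to the projector-twisted homology classes provided by Lemma~\ref{PD}. Take $\w_i=\tilde{\w}^{a,b}_{N}$ and $\w_j=\tilde{\w}^{c,d}_{N}$; since $\overline{\tilde{\w}^{a,b}_N}=\tilde{\w}^{-a,-b}_N$, the form $\Omega^{a,b,c,d}_N$ is exactly the $\Omega_{i,j}$ of Theorem~\ref{reg-for1}. Writing $f_{QR,P}=(1-x)/(1-y)$ and using Lemma~\ref{PD} to identify the Poincar\'e duals $\gamma_i=\mu_{a,b}P^{a,b}_N\gamma_N$ and $\gamma_j=\mu_{c,d}P^{c,d}_N\gamma_N$, the problem reduces to the four projector integrals
$$\int_{P^{a,b}_N\gamma_N}\log(1-x)\,\omega^{c,d}_N,\qquad \int_{P^{a,b}_N\gamma_N}\log(1-y)\,\omega^{c,d}_N,$$
together with their $(a,b)\leftrightarrow(c,d)$ analogues.

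The key step is a character-sum refinement of Lemma~\ref{keylem}. Expanding
$$P^{a,b}_N\gamma_N=\frac{1}{N^4}\sum_{u,v,r,s}\zeta_N^{-(au+bv)}\bigl(g^{u,v}_N-g^{u+r,v}_N-g^{u,v+s}_N+g^{u+r,v+s}_N\bigr)\delta_N$$
and inserting the per-orbit intermediate formula \eqref{intdeltaN} from the proof of Lemma~\ref{keylem}, the fourfold sum over $(u,v,r,s)$ factors into four independent character sums. The $v$-sum forces $\delta_{b,d}$; the $u$-sum enforces $c+j\equiv a\pmod N$, singling out the unique $j=\brac{a-c}\in\{1,\dots,N\}$; and using $a\not\equiv 0\not\equiv d\pmod N$ the $r$- and $s$-sums each contribute $N$. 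This yields
$$\int_{P^{a,b}_N\gamma_N}\log(1-x)\,\omega^{c,d}_N=-\frac{\delta_{b,d}}{N\brac{a-c}}\,\beta\!\left(\tfrac{c+\brac{a-c}}{N},\tfrac{d}{N}\right)\,_3F_2\!\left({\tfrac{c+\brac{a-c}}{N},\tfrac{\brac{a-c}}{N},1\atop \tfrac{c+d+\brac{a-c}}{N},\tfrac{\brac{a-c}}{N}+1};1\right).$$
Repeating the argument with $y$ in place of $x$ swaps the roles of the two coordinates and produces the $\delta_{a,c}$-branch with $j=\brac{b-d}$ and the hypergeometric series $\,_3F_2(\tfrac{d+j}{N},\tfrac{j}{N},1;\tfrac{c+d+j}{N},\tfrac{j}{N}+1;1)$.

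To assemble, I normalize $\omega^{c,d}_N\to\tilde{\w}^{c,d}_N$ by dividing through $\tfrac{1}{N}\beta(\tfrac{c}{N},\tfrac{d}{N})$, multiply by $\mu_{a,b}$, perform the symmetric computation for $\int_{\gamma_j}\log f_{QR,P}\tilde{\w}^{a,b}_N$ (interchanging $(a,b)\leftrightarrow(c,d)$ and $\mu_{a,b}\leftrightarrow\mu_{c,d}$), and subtract. The $\delta_{a,c}$ pieces originate from the $\log(1-y)$ integrals with coefficients $\mu_{a,b}/\brac{b-d}$ and $-\mu_{c,d}/\brac{d-b}$; the $\delta_{b,d}$ pieces come from the $\log(1-x)$ integrals with coefficients $\mu_{c,d}/\brac{c-a}$ and $-\mu_{a,b}/\brac{a-c}$. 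The factor of $2$ from Theorem~\ref{reg-for1} supplies the prefactors, and since each $\beta$-quotient and each $\,_3F_2$ value at $z=1$ appearing here is real, the extraction of $\Im$ is absorbed into the complex factors $\mu_{a,b},\mu_{c,d}$. The principal obstacle will be the orderly bookkeeping of the fourfold character sum together with the diagonal cases $a=c$ or $b=d$, in which $\brac{0}=N$ forces $j=N$: one must verify that $c+N$ and $d$ remain non-zero modulo $N$ so that the $r$- and $s$-sums still contribute their full $N$ each, and that the relabelling $(a,b)\leftrightarrow(c,d)$ produces precisely the signs as written.
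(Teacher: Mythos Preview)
Your proposal is correct and follows essentially the same route as the paper. The paper also invokes Theorem~\ref{reg-for1} with Poincar\'e duals supplied by Lemma~\ref{PD}, reduces to integrals of $\log(1-x)$ and $\log(1-y)$ against $\tilde{\omega}^{a,b}_N$ over the projector-twisted cycles, feeds in the per-orbit identity~\eqref{intdeltaN}, and lets the character sums collapse to the Kronecker deltas and the single surviving $j$; the only cosmetic difference is that the paper performs the sum in two stages (first over the $(l,m)$ defining $\gamma_N$, then over the projector variables) rather than as a single fourfold sum, and it records explicitly that $\overline{\mu_{a,b}}=-\mu_{a,b}$ so that the full expression is purely imaginary and the $\Im$ can be dropped.
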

\begin{proof}
The form
$$\Omega^{a,b,c,d}_{N}=\tilde{\w}^{a,b}_{N}\wedge\tilde{\w}^{-c,-d}_{N}+\tilde{\w}^{-a,-b}_{N}\wedge\tilde{\w}^{c,d}_{N}.$$
 is invariant under complex conjugation, so it   lies in  $F^{1}(\wedge^{2}H^{1}(F_{N},\C)) \cap \wedge^2 H^1(F_N,\R)$. 
From Lemma \ref{PD} we have that the Poincar\'e dual of $\tilde{\w}^{a,b}_N$ is  $\mu_{-a,-b}P^{-a,-b}_{N} \gamma_{N}$
where  
$$\mu_{-a,-b}=\dfrac{N^{2}(1-\zeta_{N}^{-a})(1-\zeta^{-b}_{N})}{1-\zeta_{N}^{-a-b}}$$
Note that 
$$  \overline{\mu_{a,b}}=-\mu_{a,b}.$$
namely, it is purely imaginary, 
$$\mu_{a,b}\in\Rr(1).$$ 
From the equation  \eqref{regr} in Theorem \ref{reg-for1} and the above remarks, we obtain 
\[\Im(\reg_{\mathds{R}}(\Z_{QR,P})(\Omega^{a,b,c,d}_{N}))=2 \Im\left(\displaystyle\int_{\mu_{a,b} P^{a,b}_{N}\gamma_{N}}\log{\frac{1-x}{1-y}}\tilde{\omega}^{c,d}_{N}-
\displaystyle\int_{\mu_{c,d}P_N^{c,d}\gamma_N} \log{\frac{1-x}{1-y}}\tilde{\omega}^{a,b}_{N}\right).\]
To evaluate this we first evaluate the integral over  $g^{r,s}_{N}\gamma_{N}$.

\begin{align*}
\int_{g^{r,s}_{N}\gamma_{N}}\log(1-x)\omega^{a,b}_{N}=&\dfrac{1}{N^2}\sum_{(l,m)\in G_{N}}\displaystyle(\int_{g^{r,s}_{N}\delta_{N}}\log{(1-x)}\omega^{a,b}_{N}
-\int_{g^{r+l,s}_{N}\delta_{N}}\log{(1-x)}\omega^{a,b}_{N}\\
&+\int_{g^{l+r,m+s}_{N}\delta_{N}}\log{(1-x)}\omega^{a,b}_{N}-\int_{g^{r,m+s}_{N}\delta_{N}}\log{(1-x)}\omega^{a,b}_{N})\\
\end{align*}
Each individual term can be evaluated using \eqref{intdeltaN}.  This gives 

\begin{align*}
=- \dfrac{1}{N^2}  \sum_{j=1}^{N}\left(\dfrac{\zeta_{N}^{(a+j)r+bs}}{j} \,_3F_{2}\left({{\frac{a+j}{N},\frac{j}{N},1} \atop {\frac{a+b+j}{N},\frac{j}{N}+1}};1\right)\int_{\delta_N} \omega_N^{a+j,b}  \times \left(\sum_{(l,m)\in G_{N}}\left( 1-\zeta_{N}^{(a+j)l}+  \zeta_{N}^{(a+j)l+bm}- \zeta_{N}^{bm}\right) \right)\right)\\
\end{align*}
For a fixed $j$ with $a$ and $b$ non-zero, we have 
$$\sum_{(l,m) \in G_{N}}(1 - \zeta_N^{(a+j)l} + \zeta_N^{(a+j)l+bm}-\zeta_N^{bm}+1)=N^2.$$
Further, the $\int_{\delta_N} \w^{a,b}$ can be expressed in terms of the $\beta$ function. So we finally get that the integral is 
$$-\displaystyle\sum_{j=1}^{N}\dfrac{\zeta_{N}^{(a+j)r+bs}}{j}\,_3F_{2}\left({{\dfrac{a+j}{N},\dfrac{j}{N},1} \atop {\dfrac{a+b+j}{N},\dfrac{j}{N}+1}};1\right)\dfrac{1}{N}\beta(\dfrac{a+j}{N},\dfrac{b}{N})$$
Observe that the only dependence on $r$ and $s$ is in the power of $\zeta_N$. We now use this to compute the integral over 
$$P_N^{c,d}\gamma_N=\frac{1}{N^2} \sum_{r,s} \zeta_N^{-cr-ds} g_N^{r,s} \gamma_N.$$ 
\[
\int_{P^{c,d}_{N}\gamma_{N}}\log(1-x)\omega^{a,b}_{N}=-\sum_{j=1}^{N}\left(\displaystyle\dfrac{1}{j}\,_3F_{2}\left({{\dfrac{a+j}{N},\dfrac{j}{N},1} \atop {\dfrac{a+b+j}{N},\dfrac{j}{N}+1}};1\right)\int_{\delta_{N}}
\omega^{a+j,b}_{N} \times \left( \frac{1}{N^2} \sum_{(r,s)\in G_{N}} \zeta_{N}^{(a+j-c)r+(b-d)s} \right)\right)\]
The sum 
$$\frac{1}{N^2}  \sum_{(r,s) \in G_N} \zeta_{N}^{(a+j-c)r+(b-d)s}=\begin{cases} 1 \qquad a+j\equiv c \, \mod \,N \text{ and } b=d \\ 0 \qquad \text{otherwise} \end{cases}$$
Since $1 \leq j \leq N$, only one term survives and $j=\brac{c-a}$ and we get 
\[=-\dfrac{\delta_{b,d}}{\brac{c-a}N}\,_3F_{2}\left({{\dfrac{a+\brac{c-a}}{N},\dfrac{\brac{c-a}}{N},1} \atop {\dfrac{a+b+\brac{c-a}}{N},\dfrac{\brac{c-a}}{N}+1}};1\right)
\beta(\frac{a+\brac{c-a}}{N},\frac{b}{N}).\]
Normalising using $\tilde{\w}_{N}^{a,b}$ instead of $\w_N^{a,b}$ we get 
\[
\int_{P^{c,d}_{N}\gamma_{N}}\log(1-x)\tilde{\omega}^{a,b}_{N}=-\dfrac{\delta_{b,d}}{\brac{c-a}}\,_3F_{2}\left({{\dfrac{a+\brac{c-a}}{N},\dfrac{\brac{c-a}}{N},1} \atop {\dfrac{a+b+\brac{c-a}}{N},\dfrac{\brac{c-a}}{N}+1}};1\right)\frac{
\beta(\frac{a+\brac{c-a}}{N},\frac{b}{N})}{\beta(\frac{a}{N},\frac{b}{N})}.\]
%\allowdisplaybreaks
Similarly we obtain 
\[
\int_{P^{c,d}_{N}\gamma_{N}}\log(1-y)\tilde{\omega}^{a,b}_{N}=-\dfrac{\delta_{a,c}}{\brac{d-b}}\,_3F_{2}\left({{\dfrac{b+\brac{d-b}}{N},\dfrac{\brac{d-b}}{N},1} \atop {\dfrac{b+a+\brac{d-b}}{N},\dfrac{\brac{d-b}}{N}+1}};1\right)\frac{
\beta(\frac{b+\brac{d-b}}{N},\frac{a}{N})}{\beta(\frac{a}{N},\frac{b}{N})}.\]
Observe that all these expressions  are real numbers. Combining them with the fact that $
\mu_{a,b}\in\R(1)$ we get that the integral expression is purely imaginary -- so we can drop the $\Im$ and we have 
\[\Im(\reg_{\mathds{R}}(\Z_{QR,P})(\Omega^{a,b,c,d}_{N}))=2\delta_{a,c}[\dfrac{\mu_{a,b}}{\brac{b-d}}\,_3F_{2}\left({{\dfrac{d+\brac{b-d}}{N},\dfrac{\brac{b-d}}{N},1} \atop {\dfrac{d+c+\brac{b-d}}{N},\dfrac{\brac{b-d}}{N}+1}};1\right)\frac{
\beta(\frac{d+\brac{b-d}}{N},\frac{c}{N})}{\beta(\frac{c}{N},\frac{d}{N})}-\]
\[\dfrac{\mu_{c,d}}{\brac{d-b}}\,_3F_{2}\left({{\dfrac{b+\brac{d-b}}{N},\dfrac{\brac{d-b}}{N},1} \atop {\dfrac{b+a+\brac{d-b}}{N},\dfrac{\brac{d-b}}{N}+1}};1\right)\frac{
\beta(\frac{b+\brac{d-b}}{N},\frac{a}{N})}{\beta(\frac{a}{N},\frac{b}{N})}]+\]
\[2\delta_{b,d}[\dfrac{\mu_{c,d}}{\brac{c-a}}\,_3F_{2}\left({{\dfrac{a+\brac{c-a}}{N},\dfrac{\brac{c-a}}{N},1} \atop {\dfrac{a+b+\brac{c-a}}{N},\dfrac{\brac{c-a}}{N}+1}};1\right)\frac{
\beta(\frac{a+\brac{c-a}}{N},\frac{b}{N})}{\beta(\frac{a}{N},\frac{b}{N})}-\]
\[\dfrac{\mu_{a,b}}{\brac{a-c}}\,_3F_{2}\left({{\dfrac{c+\brac{a-c}}{N},\dfrac{\brac{a-c}}{N},1} \atop {\dfrac{c+d+\brac{a-c}}{N},\dfrac{\brac{a-c}}{N}+1}};1\right)\frac{
\beta(\frac{c+\brac{a-c}}{N},\frac{d}{N})}{\beta(\frac{c}{N},\frac{d}{N})}]\]

\end{proof}

\begin{lem}\cite{aoki1}\label{Hodge-cycle-lem} Let $N>3$ be a prime number. If 
$a+b<N$ and $c+d<N$ then  $\Omega^{a,b,c,d}_{N}:=\tilde{\omega}^{a,b}_{N}\wedge\tilde{\omega}^{-c,-d}_{N}$ is a  Hodge cycle if and only if the triples  $\{a,b,N-(a+b)\}$ and $\{c,d,N-(c+d)\}$ are equal upto a permutation. 
\end{lem}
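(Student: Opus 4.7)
The plan is to identify the Galois-theoretic content of ``Hodge cycle'' for $G_N$-eigenforms in $\wedge^2 H^1(F_N)$ and then match this with the triple condition via a short combinatorial lemma. The Aoki result in \cite{aoki1} is far more general, and I would derive the stated statement as a specialization to the $\wedge^2 H^1$ setting.

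First I would dispose of the Hodge-type question: by Lemma~\ref{hol-le}(2), $a+b<N$ forces $\tilde\omega^{a,b}_N \in H^{1,0}(F_N)$, and $c+d<N$ forces $\tilde\omega^{-c,-d}_N=\overline{\tilde\omega^{c,d}_N}\in H^{0,1}(F_N)$, so $\Omega^{a,b,c,d}_N\in H^{1,0}\wedge H^{0,1}\subset H^{1,1}(\wedge^2 H^1(F_N,\C))$ is automatic. Hence the question reduces entirely to $\Q$-rationality of the class.

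Next I would set up the Galois criterion. The forms $\tilde\omega^{a,b}_N$ span one-dimensional $G_N$-eigenlines in $H^1(F_{N,E_N})$; using $\sigma_t(g^{r,s}_N)=g^{tr,ts}_N$ for $\sigma_t\in\Gal(E_N/\Q)=(\ZZ/N\ZZ)^{\times}$, together with the normalization~\eqref{normal}, one checks that $\sigma_t$ carries the $(a,b)$-eigenline to a nonzero $E_N$-scalar multiple of the $(ta,tb)$-eigenline. Consequently the Galois orbit of $\Omega^{a,b,c,d}_N$ is, up to scalars, $\{\Omega^{ta,tb,tc,td}_N:t\in(\ZZ/N\ZZ)^{\times}\}$, and its $\Q$-span is the smallest sub-Hodge structure of $\wedge^2 H^1(F_N,\Q)$ whose complexification contains $\Omega^{a,b,c,d}_N$. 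Calling $\Omega^{a,b,c,d}_N$ a Hodge cycle is therefore equivalent to demanding that every $\Omega^{ta,tb,tc,td}_N$ retains pure type $(1,1)$, which by Lemma~\ref{hol-le}(2) and the antisymmetry of the wedge becomes
\begin{equation*}
\brac{ta}+\brac{tb}<N \iff \brac{tc}+\brac{td}<N \qquad \text{for all } t\in(\ZZ/N\ZZ)^{\times}.
\end{equation*}

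Finally I would prove the combinatorial equivalence between this indicator-function identity and the equality of unordered triples $\{a,b,N-(a+b)\}=\{c,d,N-(c+d)\}$. The $(\Leftarrow)$ direction is a short verification: using $\brac{-x}=N-\brac{x}$ together with $\brac{t(a+b)}=\brac{ta}+\brac{tb}-N\cdot[\brac{ta}+\brac{tb}>N]$, one checks that the function $t\mapsto[\brac{ta}+\brac{tb}<N]$ is invariant under each of the six permutations of the triple $(a,b,N-a-b)$. The $(\Rightarrow)$ direction is the main obstacle and is the content of Aoki's theorem in~\cite{aoki1}: for $N$ prime and $>3$ one must show that two distinct triples necessarily produce distinct indicator functions on $(\ZZ/N\ZZ)^{\times}$. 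This is exactly where primality of $N$ enters, through a counting argument on the orbits $\{ta,tb,t(N-a-b)\}\pmod N$ that rules out accidental coincidences between inequivalent triples; I would invoke \cite{aoki1} for this step rather than reprove it.
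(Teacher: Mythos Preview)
Your proposal is correct and ultimately relies on the same input as the paper's proof: both invoke Aoki's classification in \cite{aoki1} and observe that for prime $N$ the only surviving condition is the triple equality. The paper's proof is a bare citation (``In Theorem~0.3 \cite{aoki1}, Aoki lists the conditions\dots\ Since $N$ is prime, there is only the condition above''), whereas you unpack the mechanism---the automatic $(1,1)$ type, the Galois orbit under $(\ZZ/N\ZZ)^{\times}$, and the reduction to the indicator-function identity $[\brac{ta}+\brac{tb}<N]\Leftrightarrow[\brac{tc}+\brac{td}<N]$---before invoking Aoki for the hard combinatorial direction. This extra detail is useful exposition and is exactly the reduction implicit in applying Aoki's theorem to $\wedge^2 H^1$, so the two approaches are not genuinely different.
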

\begin{proof}
In  Theorem 0.3  \cite{aoki1}, Aoki lists the conditions when $\Omega^{a,b,c,d}_N$ is a Hodge cycle. Since $N$ is prime, there is only the condition above.  
\end{proof}

\begin{thm}\label{res-2}
When $N=11,13,17,19, 23$, numerical evidence suggests that the the cycle  $\Z_{QR,P}$ is  indecomposable. 
\end{thm}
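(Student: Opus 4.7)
The strategy is to invoke the standard regulator-indecomposability criterion recalled in the introduction: exhibit, for each prime $N \in \{11,13,17,19,23\}$, a real $(1,1)$-class $\Omega^{a,b,c,d}_N$ lying outside the $\R$-span of the Hodge classes such that $\reg_{\R}(\Z_{QR,P})(\Omega^{a,b,c,d}_N) \neq 0$; this forces $\Z_{QR,P}$ to be indecomposable. Two ingredients drive the plan. Lemma \ref{Hodge-cycle-lem} shows that for $N$ prime the form $\Omega^{a,b,c,d}_N$ is a Hodge cycle exactly when the multisets $\{a,b,N-a-b\}$ and $\{c,d,N-c-d\}$ coincide. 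The explicit formula proved just above gives a closed-form evaluation of $\Im(\reg_{\R}(\Z_{QR,P})(\Omega^{a,b,c,d}_N))$ as a combination of $\mu$-factors, Beta quotients, and values of $\,_3F_{2}$ at $1$, gated by the Kronecker deltas $\delta_{a,c}$ and $\delta_{b,d}$.

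To reconcile the two conditions, I would fix $a=c$, take $b\neq d$, and further impose $d \notin \{b,\, N-a-b\}$. The first condition keeps the $\delta_{a,c}$ branch of the regulator formula alive; together with $b\neq d$, the last condition kills the Hodge criterion. For each $N \in \{11,13,17,19,23\}$ I would enumerate all admissible quadruples $(a,b,a,d)$ with $a+b<N$, $a+d<N$, and $d \notin \{b,\,N-a-b\}$, and numerically evaluate the surviving difference of two $\,_3F_{2}$ terms produced by Theorem \ref{main-thm}. The series defining $\,_3F_{2}(\cdots;1)$ converges because the parameter excess equals $b/N>0$, so a moderate truncation combined with exact Gamma evaluations for the Beta quotients and exact cyclotomic arithmetic for $\mu_{a,b}$ yields the output to arbitrary desired precision. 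Producing a single nonvanishing value for even one such quadruple certifies indecomposability for that prime $N$.

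The principal obstacle is rigor. A finite-precision numerical calculation cannot, in principle, rule out a hidden algebraic identity among these $\,_3F_{2}$-values and Beta quotients that forces exact cancellation, and this is precisely why Theorem \ref{res-2} is phrased in terms of \emph{numerical evidence} rather than a theorem of the usual form. I would mitigate the risk in two ways: first, by carrying out the computation at several hundred digits of precision, so that the observed magnitude dominates all rounding error by many orders of magnitude; and second, by checking a family of distinct admissible quadruples $(a,b,a,d)$ for each $N$, so that any conspiratorial cancellation would have to occur simultaneously across all of them. Upgrading this to an unconditional statement would require either a closed-form simplification of the regulator expression or a transcendence-type lower bound on it, and is the genuine obstruction that this proposal does not attempt to remove.
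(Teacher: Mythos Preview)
Your proposal is correct and follows essentially the same approach as the paper: choose $(a,b,c,d)$ with $a=c$ so that the $\delta_{a,c}$ branch of the closed-form regulator survives, while arranging via Lemma~\ref{Hodge-cycle-lem} that $\Omega^{a,b,c,d}_N$ is not a Hodge class, and then verify numerically that the resulting hypergeometric expression is nonzero. The paper simply commits to the specific one-parameter family $(a,b,c,d)=(1,i,1,2i)$ with $i\leq N/4$ and tabulates the values $f(i,N)=\Im\bigl(\reg_{\R}(\Z_{QR,P})(\Omega^{1,i,1,2i}_N)\bigr)/(2N^2)$ computed in \textsc{Mathematica}, whereas you describe the broader admissible set; otherwise the arguments coincide, including the acknowledgment that the conclusion rests on numerical evidence rather than a rigorous nonvanishing proof.
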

\begin{proof}
From Lemma \ref{Hodge-cycle-lem}  $\Omega^{1,i,1,j}_{N}$ is a Hodge cycle if and only if $i+j+1 \equiv 0\, \mod \, N$. In particular, we can choose $j=2i$ for $i\leq N/4$  and compute 
the regulator $\reg_{\R}(\Z_{QR,P})(\Omega^{1,i,2,2i})$ using the  theorem 6.1. Note that for $a$, $b$ non-zero, the numerical values of $\mu_{a,b}$ may have a non-zero real part though we know $\mu_{a,b}\in \R(1)$.   

For integers $i$ and $N$, let 
$$f(i,N)=\dfrac{\Im\left( \reg_{\R}(\Z_{QR,P})(\Omega^{1,i,1,2i}_{N})\right)}{2N^{2}}.$$ 
The non-vanishing of $f(i,N)$ suggests that the cycle $\Z_{QR,P}$ is indecomposable. The following table gives some values  computed using MATHEMATICA. 

\begin{center}
\begin{tabular}{|c|c|c|c|}
\hline
i & N & f(i,N)\\  \hline
2& 13& 0.0753593\\ \hline
2& 17& 0.0591967\\ \hline
3& 17& 0.0419067\\ \hline
4& 17& 0.0306883\\ \hline
3& 19& 0.0382251\\ \hline
4& 19& 0.0285317\\ \hline
3& 23& 0.0323588
\\ \hline
4& 23& 0.0247137\\ \hline
5& 23& 0.0193323\\ \hline
%13& 17& 58.8503\\ \hline
\end{tabular}
\end{center}
\end{proof}
\bibliographystyle{alpha}
%\bibliography{subhamref.bib}

\begin{verse}
Subham Sarkar  \\
Indian Statistical Institute, Bangalore\\
$8^{th}$ mile, Mysore Road, Bangalore 560 059, Karnataka, India\\
Email: subham.sarkar13@gmail.com

\end{verse}

\end{document}